\bmdefine{\bX}{X}
\bmdefine{\ba}{$\alpha$}
\newcommand{\referenza}{}
\newtheorem{thm}{Theorem}[section]
\newtheorem*{thm*}{Theorem }
\newtheorem*{thm**}{Theorem}
\newtheorem*{cor*}{Corollary \referenza}
\newtheorem{lem}[thm]{Lemma}
\newtheorem*{lem*}{Lemma \referenza}
\newtheorem*{clm*}{Claim \referenza}
\newtheorem{prop}[thm]{Proposition}
\newtheorem*{prop*}{Proposition \referenza}
\newtheorem*{ass*}{Assumption \referenza}
\newtheorem*{conj*}{Conjecture \referenza}
\newtheorem{rmk}[thm]{Remark}
\theoremstyle{plain}
\def \R {\mathbb R}
\begin{document}

\title{Solvability of a class of fully nonlinear elliptic equations on tori.}

\author{Elia Fusi}
\address{	Dipartimento di Matematica "Giuseppe Peano"\\
	Universit\`a di Torino,
	Via Carlo Alberto 10,
	10123 Torino, Italy}
\email{elia.fusi@unito.it}

\keywords{Fully nonlinear elliptic equations, Tori, Calabi-Yau equations.}
\subjclass[2020]{35A09 (primary), 53C21 (secondary).}

\begin{abstract}
We study the solvability of a class of fully nonlinear equations on the flat torus. The equations arise in the study of some Calabi-Yau type problems in torus bundles.  
\end{abstract}

\maketitle

\section{Introduction}

After Yau proved the Calabi conjecture in \cite{17}, some new Calabi-Yau type  equations were introduced on non-K\"ahler manifolds. Donaldson in \cite{4} formulated a project on compact 4-dimensional symplectic manifolds which is based on a Calabi-Yau equation  on compact almost-K\"ahler  manifolds. The problem was studied by Weinkove in \cite{16} and by Tosatti, Weinkove and Yau in \cite{12} assuming extra hypothesis on the curvature and on the torsion of the almost-K\"ahler metric. Later, Tosatti and Weinkove in \cite{13} solved the almost-K\"ahler Calabi-Yau equation on the Kodaira-Thurston manifold assuming the data to be invariant under the action of a 2-dimensional torus. Buzano, Fino and Vezzoni in \cite{2} generalized the Tosatti-Weinkove theorem to $S^1$-invariant data. 
In the latter case,  the problem reduces to the study of the following equation on a $3$-dimensional torus $T^3$
\begin{equation}\label{eq:1}
(1+u_{xx})(1+u_{yy}+u_{tt}+u_t)-u_{xy}^2-u_{xt}^2=e^{f}\,,
\end{equation} 
where $f\in C^{\infty}(T^3)$ is given and satisfies $$
\int_{T^3}e^fdV=1\,.$$ This last result was then extended by Tosatti and Weinkove in \cite{14} considering different almost-K\"ahler structures.

In \cite{1} Alesker and Verbitsky introduced a Calabi-Yau problem in HKT geometry and formulated the so called
{\em quaternionic Calabi conjecture}. Gentili and Vezzoni in \cite{6} confirmed the conjecture on 8-dimensional nilmanifolds endowed with an Abelian HKT structure and assuming the data invariant by the action of  a $3$-dimensional torus. 
Under these assumptions, the problem reduces to the following 
equation on a $5$-dimensional torus $T^5$ \begin{equation}\label{eq:2}
(1+u_{55})\left(1+\sum_{i=1}^4u_{ii}\right)-\sum_{i=1}^4u_{i5}^2=e^f\,,
\end{equation} where again $f\in C^{\infty}(T^5)$ is given and satisfies $$
\int_{T^5}e^fdV=1\,.
$$ 

The goal of the present paper is to study a class of PDEs on the $n$-dimensional torus including \eqref{eq:1} and \eqref{eq:2} as special cases. Namely, we consider the following type of equations on the $n$-dimensional torus $T^n$
\begin{equation}\label{eq:3}
(1+u_{nn}+G(\nabla u))\left(1+\sum_{i=1}^{n-1}u_{ii}+F(\nabla u)\right)-\sum_{i=1}^{n-1}u_{in}^2=e^{f}\,,
\end{equation}
where $n>2$,  $f\in C^{\infty}(T^n)$ and $F$, $G$ are smooth functions of the gradient of $u$. In the case in which $n=2$, many equations, even more general than (\ref{eq:3}), were studied, see, for instance, \cite{3},\cite{5},\cite{8} and \cite{15}. 

\medskip
Our main result is the following:
\begin{thm}\label{thm:1.1} 
Let $X,Y$ be two smooth vector fields with constant coefficients on $T^n$, $n>2$, such that, $\forall v\in C^{\infty}(T^n)$, $F(\nabla v)=X^iv_i$ and $G(\nabla v)=Y^iv_i.$ 
Then, equation (\ref{eq:3}) has a unique solution $u\in C^{\infty}(T^n)$ such that $$
\int_MudV=0\,.
$$
 \end{thm} 

The proof of Theorem \ref{thm:1.1} is based on the continuity method and it will be 
obtained as follows: in Section 2, we prove some preliminary results about solutions of (\ref{eq:3}); in Section 3, we prove the $C^0$ estimate using the Aleksandrov-Bakelman-Pucci maximum principle; in Section  4 we prove the $C^0$ estimate for the Laplacian of the solutions and higher order estimates by modifying an argument from \cite{6} and in \cite{2}; in Section 5, we conclude the proof of Theorem \ref{thm:1.1}. 

In the last section, more general equations are taken into account.

{\itshape Acknowledgements.} The author is very grateful to Professor Luigi Vezzoni for his supervision and support during the work. Many thanks are also due to Giovanni Gentili and Federico Giusti for stimulating discussions and  suggestions. 

The author is supported by GNSAGA of INdAM. 
\section{Preliminaries.}
In the following, we will always denote the $n$-dimensional torus as $M$, assuming  $n> 2$, and make use of the Einstein convention over repeated indexes. Moreover,  we will identify functions on $M$ with functions on $\mathbb R^n$  which are $1$-periodic in each variable and denote with $\{x_1,\ldots, x_n\}$ the standard coordinates on $\mathbb R^n$, unless otherwise stated.

In this section, $F$ and $G$ will only be  smooth functions of the gradient of $u$ such that $F(0)=G(0)=0$. For the sake of simplicity, given $u\in C^{2}(M)$, we introduce the following notation:
 $$
A=1+u_{nn}+G(\nabla u)\,,\quad B=1+\sum_{i=1}^{n-1}u_{ii}+F(\nabla u)\,.
$$
 So, fixed  $f\in C^{\infty}(M),$ equation (\ref{eq:3}) can be written in a more compact way, which is:
 \begin{equation}\label{eq:equazione}
 AB-\sum_{i=1}^{n-1}u_{in}^2=e^{f}\,,
 \end{equation} where $u\in C^{\infty}(M)$ is the unknown. We will search for solutions $u$ such that $$
 \int_MudV=0\, ,
 $$ where $dV$ denotes the standard volume form on $M$. In what follows, we will denote with
 $$
  C^{k,\alpha}_0(M)=\left\{v\in C^{k,\alpha}(M)\quad  \middle| \quad  \int_MvdV=0\right\}\,,\quad \forall \alpha\in (0,1)\, ,\,\,\,\forall k\ge 0\,.
  $$ 

First of all, we can observe easily from  equation (\ref{eq:equazione}) that $AB>0$. Then $A$ and $B$ have the same sign. On the other hand, if $p\in M$ is the point where $u$ attains its minimum, we have $\nabla u=0$  and $u_{nn}>0$ at $p$. So, $A,B>0$  on $M$. This, together with the fact that
\begin{equation}\label{eq:5}
A+B=2+\Delta u+(F+G)(\nabla u )>0\,, 
\end{equation} gives a lower bound on $Lu=\Delta u+(F+G)(\nabla u )$. Obviously, $L$ is a linear elliptic differential operator of second order.  Furthermore, this lower bound can be easily improved.
\begin{lem}\label{lem:2.1}
Let $F$ and $G$ be two smooth functions such that $F(0)=0, G(0)=0$ and let $u\in C^2_0(M)$ a solution of (\ref{eq:3}).  Then, the following holds: \begin{equation}\label{eq:31}
A+B\ge 2e^{\frac{f}{2}}\,.
\end{equation}
\end{lem}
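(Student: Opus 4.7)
The plan is to exploit the compact form \eqref{eq:4} of the equation together with the positivity of $A$ and $B$ that was established just above the lemma, and then apply the AM-GM inequality.

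More precisely, from \eqref{eq:4} we have
\[
AB = e^{f} + \sum_{i=1}^{n-1} u_{in}^2 \geq e^{f},
\]
because the sum $\sum_{i=1}^{n-1} u_{in}^2$ is manifestly nonnegative. Since it was already shown that $A>0$ and $B>0$ everywhere on $M$, both quantities are positive real numbers, so the AM-GM inequality gives
\[
A + B \geq 2\sqrt{AB} \geq 2\sqrt{e^{f}} = 2 e^{f/2},
\]
which is exactly \eqref{eq:31}.

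There is no real obstacle here: the content of the lemma is just the combination of (i) positivity of $A$ and $B$ (already done), (ii) the trivial lower bound $AB \geq e^{f}$ coming from dropping the nonnegative off-diagonal Hessian term, and (iii) AM-GM for two positive numbers. The lemma should be written in essentially two lines along the chain displayed above.
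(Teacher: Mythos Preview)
Your proof is correct and essentially identical to the paper's: both drop the nonnegative term $\sum_{i=1}^{n-1}u_{in}^2$ to get $AB\ge e^f$ and then apply AM--GM (the paper writes it as $(A+B)^2\ge 4AB\ge 4e^f$, which is the same thing).
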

 \begin{proof}From (\ref{eq:equazione}), we obtain that
$$
AB\ge e^{f}\,.$$ On the other hand, we know that $A^2+B^2\ge 2AB. $ So, $$
(A+B)^2\ge 4AB\ge 4e^{f}\,.
$$
From this, the assertion follows.
\end{proof} 
 Lemma \ref{lem:2.1} will be used in order to prove the $C^2$ estimate in Section 4.

 The next Lemma is just a technical result which will be extremely useful in order to prove the ellipticity of equation (\ref{eq:3}).
 \begin{lem}\label{lem:1.2}
 Let $a,b,c_i\in \R$, $\forall i=1,\ldots, n-1$. Then, the characteristic polynomial of
 $$
 P_n=\begin{pmatrix}a & 0 & \cdots & 0 &-c_1 \\ 0 & a & 0 & \cdots &-c_2 \\   \vdots &  & \ddots & & \vdots \\0 & \cdots & \cdots & a & -c_{n-1} \\ -c_1& -c_2 &\cdots & -c_{n-1}&  b \end{pmatrix} $$
  is $$
  \det(P_n-\lambda Id)=(a-\lambda)^{n-2}\left(\lambda^2-(a+b)\lambda+ab-\sum_{i=1}^{n-1}c_i^2\right)\,.$$
  \end{lem}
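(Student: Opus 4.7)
The plan is to exploit the arrowhead structure of $P_n$ via the block (Schur complement) determinant formula. Writing
\[
P_n - \lambda\, Id = \begin{pmatrix} (a-\lambda) I_{n-1} & v \\ v^T & b - \lambda \end{pmatrix},
\]
with $v=-(c_1,\ldots,c_{n-1})^T$ and $I_{n-1}$ the identity matrix of size $n-1$, the top-left block is a scalar multiple of the identity and hence invertible whenever $\lambda\neq a$. In that range, the Schur complement formula applied with $A=(a-\lambda)I_{n-1}$, $B=v$, $C=v^T$, $D=b-\lambda$ gives
\[
\det(P_n - \lambda\, Id) = (a-\lambda)^{n-1}\left(b-\lambda - \frac{1}{a-\lambda}\sum_{i=1}^{n-1} c_i^2\right).
\]
Factoring one power of $(a-\lambda)$ out of the parenthesis and expanding $(a-\lambda)(b-\lambda)=\lambda^2-(a+b)\lambda+ab$ yields the claimed identity for every $\lambda\neq a$. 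Since both sides are polynomials in $\lambda$, the equality extends to all $\lambda\in\R$ by continuity.

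An alternative route is induction on $n$. The base case $n=2$ is the direct $2\times 2$ computation $(a-\lambda)(b-\lambda)-c_1^2=\lambda^2-(a+b)\lambda+ab-c_1^2$. For the inductive step, expanding along the first row of $P_n-\lambda\, Id$ leaves only two nonzero cofactors, corresponding to the entries $a-\lambda$ at position $1$ and $-c_1$ at position $n$: the first cofactor is the characteristic polynomial of an arrowhead matrix of the same form but with parameters $c_2,\ldots,c_{n-1}$, to which the induction hypothesis applies, while the second cofactor is the determinant of an almost upper-triangular matrix and reduces, after a single expansion along its first column, to $\pm c_1(a-\lambda)^{n-2}$. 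Assembling the two contributions recovers the stated factorization.

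There is no genuine obstacle here: the statement is a purely algebraic identity and reduces to a short computation once the block structure is recognized. The only minor care needed, in the inductive approach, is the bookkeeping of cofactor signs and powers of $(a-\lambda)$, which motivates preferring the Schur-complement proof presented first.
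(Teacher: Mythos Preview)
Your argument is correct. The paper itself does not spell out a proof, stating only that the lemma ``can be proved by a straightforward induction''; your second, alternative route matches that approach exactly. Your primary argument via the Schur complement is a genuinely different and cleaner route: by recognizing the top-left block as $(a-\lambda)I_{n-1}$, you bypass the induction entirely and obtain the factorization in one line, with the polynomial identity extending from $\lambda\neq a$ to all $\lambda$ by continuity. The inductive approach has the mild advantage of being self-contained (no appeal to the block determinant formula), but requires the cofactor bookkeeping you mention; the Schur-complement proof is shorter and makes the source of the factor $(a-\lambda)^{n-2}$ transparent.
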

   Lemma \ref{lem:1.2} can be proved by a straightforward induction. Then, ellipticity of (\ref{eq:3}) is a direct consequence of Lemma \ref{lem:1.2}.
   \begin{prop}\label{prop:1.3} 
   Equation (\ref{eq:3}) is elliptic. Moreover, $\forall \zeta\in \R^n$, $\zeta\ne 0 $, we have
   \begin{equation}\label{eq:4}
   A\sum_{i=1}^{n-1}\zeta_i^2+B\zeta_n^2-2\sum_{i=1}^{n-1}u_{in}\zeta_i\zeta_n\ge \lambda_-\lvert\zeta \rvert^2\,,
   \end{equation} where $$
   \lambda_-=\frac{1}{2}\left(A+B-\sqrt{(A+B)^2-4e^{f}}\right)\,.
   $$
   \end{prop}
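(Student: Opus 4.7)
The quadratic form on the left-hand side of \eqref{eq:4} is precisely $\zeta^{T}P_n\zeta$, where $P_n$ is the matrix from Lemma \ref{lem:1.2} with $a=A$, $b=B$ and $c_i=u_{in}$ for $i=1,\ldots,n-1$. So the whole problem reduces to identifying the smallest eigenvalue of $P_n$. My plan is therefore to apply Lemma \ref{lem:1.2} directly, then use equation \eqref{eq:4} (really \eqref{eq:3}) to recognize the constant term of the quadratic factor as $e^{f}$, and finally to compare the resulting eigenvalues.

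By Lemma \ref{lem:1.2}, the characteristic polynomial of $P_n$ factors as
\[
(A-\lambda)^{n-2}\Bigl(\lambda^{2}-(A+B)\lambda+AB-\sum_{i=1}^{n-1}u_{in}^{2}\Bigr).
\]
Since $u$ solves \eqref{eq:3}, the quantity $AB-\sum_{i=1}^{n-1}u_{in}^{2}$ equals $e^{f}$, so the eigenvalues of $P_n$ are $A$ (with multiplicity $n-2$) together with the two roots
\[
\lambda_{\pm}=\tfrac{1}{2}\bigl(A+B\pm\sqrt{(A+B)^{2}-4e^{f}}\bigr)
\]
of $\lambda^{2}-(A+B)\lambda+e^{f}=0$. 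The discriminant is nonnegative by Lemma \ref{lem:2.1}, so $\lambda_{\pm}$ are real. Moreover $\lambda_{-}\lambda_{+}=e^{f}>0$ and $\lambda_{-}+\lambda_{+}=A+B>0$ (recall that $A,B>0$ was established before Lemma \ref{lem:2.1}), so both $\lambda_{-}$ and $\lambda_{+}$ are strictly positive. This already proves that $P_n$ is positive definite, i.e., that equation \eqref{eq:3} is elliptic.

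To finish I need to check that $\lambda_{-}$ is in fact the smallest eigenvalue, which amounts to showing $\lambda_{-}\le A$. This is clear if $B\le A$; when $B>A$ the inequality $\lambda_{-}\le A$ is equivalent, after rearrangement, to $(B-A)^{2}\le (A+B)^{2}-4e^{f}$, i.e.\ to $AB\ge e^{f}$, which follows immediately from \eqref{eq:3} since $AB=e^{f}+\sum_{i=1}^{n-1}u_{in}^{2}$. Thus $\lambda_{-}\le \min(A,\lambda_{+})$, and because $P_n$ is symmetric the standard variational characterization of the smallest eigenvalue yields \eqref{eq:4} for every $\zeta\in\R^{n}$.

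The main obstacle I anticipate is essentially bookkeeping: checking $\lambda_{-}\le A$ to ensure the scalar $\lambda_{-}$ (rather than $A$) truly governs the lower bound. All other steps are immediate consequences of Lemma \ref{lem:1.2}, the equation \eqref{eq:3}, and the sign information $A,B>0$ already available from the preliminary discussion.
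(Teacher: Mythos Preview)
Your proof is correct and follows essentially the same approach as the paper: both identify the symmetric matrix $P_n$ (the paper derives it as the principal symbol of the linearized operator, you recognize it directly from the quadratic form), apply Lemma~\ref{lem:1.2}, substitute $AB-\sum_{i=1}^{n-1}u_{in}^{2}=e^{f}$, and conclude by checking that $\lambda_{-}\le A\le\lambda_{+}$. Your case analysis for $\lambda_{-}\le A$ is a bit more explicit than the paper's, which simply asserts this inequality is easy, but the argument is the same.
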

   \begin{proof}
   Define the operator $\Phi\colon C^{2}(M)\to C^0(M)$ such that, $\forall u\in C^2(M),$ $$
   \Phi(u)=AB-\sum_{i=1}^{n-1}u_{in}^2\,.
   $$
    Suppose that $u$ is a solution for (\ref{eq:3}), then  the linearization of $\Phi$ at $u$ computed in $v\in C^2(M)$ is:
    $$
    d_u\Phi(v)=Bv_{55}+A\left(\sum_{i=1}^{n-1}v_{ii}\right)-2\sum_{i=1}^{n-1}u_{in}v_{in}+A\frac{d}{dt}F(\nabla(u+tv))_{|_{t=0}}+B\frac{d}{dt}G(\nabla (u+tv))_{|_{t=0}}\,.
    $$
     Then, the matrix corresponding to the principal symbol of $\Phi$ is 
     $$
     P_u=\begin{pmatrix}A & 0 & \cdots & 0 &-u_{1n} \\ 0 & A & 0 & \cdots &-u_{2n} \\   \vdots &  & \ddots & & \vdots \\0 & \cdots & \cdots & A & -u_{n-1n} \\ -u_{1n}& -u_{2n} &\cdots & -u_{n-1n}&  B \end{pmatrix}
     $$
      whose eigenvalues, thanks to Lemma \ref{lem:1.2}, are$$
      \lambda=A\,, \quad  \lambda_{\pm}=\frac{1}{2}\left(A+B\pm\sqrt{(A+B)^2-4e^{f}}\right)>0\,.
      $$
       It is easy to prove that $\lambda_{-}\le A\le \lambda_{+}$. This guarantees the assertion.
       \end{proof}
\section{$C^0$ estimate.}

The method we used to obtain the $C^0$ estimate on the solution is based on  Sz\'ekelyhidi's method in \cite{9}.  In this section, we will suppose that $F(\nabla v)=X^iv_i$ and $G(\nabla v)=Y^iv_i$, $\forall v\in C^{1}(M)$, where $X,Y$ are smooth vector fields on $M$, without any further hypothesis. Then, the estimate we proved holds true in a more general setting than the one described by Theorem \ref{thm:1.1}.

Let $u\in C^2_0(M)$ a solution of (\ref{eq:3}) and  $p\in M$ be the point where $u $ attains its minimum. Consider a local chart centered in $p$ and suppose that its domain is $B(0,1)\subset \mathbb{R}^n$. Fix $\varepsilon >0$ and define the function $$
\varphi(x)=u(x)-\max_Mu+\varepsilon\lvert x\rvert^2\,,\quad \forall x\in B(0,1)\,.
$$
Clearly, we have that$$ D^2\varphi =D^2u+2\varepsilon Id\,,
\quad  \nabla\varphi=\nabla u+2\varepsilon x\,.
$$
Then, $\varphi$ is a solution of the following:
\begin{equation}\label{eq:7}
\left(1-2\varepsilon+\varphi_{nn}+G(\nabla \varphi)- 2\varepsilon G(x)\right)\left(1-2(n-1)\varepsilon+\sum_{i=1}^{n-1}\varphi_{ii}+F(\nabla \varphi)-2\varepsilon F(x)\right)-\sum_{i=1}^{n-1}\varphi_{in}^2=e^{f}\,.
\end{equation} 
Define $$
P=\{x\in B(0,1) \quad  |\quad \lvert \nabla\varphi(x)\rvert< \frac{\varepsilon}{2}\,,\,\,\varphi(y)\ge \varphi(x)+\nabla\varphi(x)\cdot (y-x)\,,\,\,\forall y\in B(0,1) \}\,,
$$
 called the contact set of $\varphi$. Then, easily, we see that $$
 \varphi(x)\le \varphi(0)+\nabla\varphi(x)\cdot x\le \varphi(0)+\frac{\varepsilon}{2}=\min_Mu-\max_Mu+\frac{\varepsilon}{2}\,,\quad  \forall x\in P
 $$
  which implies that 
  \begin{equation}\label{eq:8}
  \tilde{u}(x)\le \min_M\tilde{u}+\frac{\varepsilon}{2}\,,\quad \forall x\in P\,,
  \end{equation} where $$
  \tilde{u}=u-\max_Mu\le 0\,.
  $$
   So, given any $p\in[1,\frac{n}{n-2})$, we can apply the function $h(t)=-t^p$  to (\ref{eq:8})  and integrate it on $P$ obtaining that:  
   \begin{equation}\label{eq:9}
   \lVert u\rVert_{C^0(M)}\le \max_Mu-\min_M u=\lVert \tilde{u}\rVert_{C^0(M)}\le \frac{\lVert \tilde{u}\rVert_{L^p(M)}}{Vol(P)^{\frac{1}{p}}}+\frac{\varepsilon}{2}\,.
   \end{equation}
   \begin{rmk}
   The inequality $$
   \lVert u\rVert_{C^0(M)}\le \max_Mu-\min_M u
   $$
    is a direct consequence of the fact that $u$ has zero mean value on $M$.\end{rmk} Then, in order to obtain the estimate we want, it is sufficient to find a  uniform lower bound on $Vol(P)$ and a uniform upper bound on $\lVert \tilde{u}\rVert_{L^p(M)}$. As regards the latter, we recall that, thanks to (\ref{eq:5}), $-\tilde{u}$ is a non-negative supersolution for the equation $Lv=-2$, where, in our hypothesis, $L$ is a linear elliptic  differential operator of second order with smooth coefficients. Then, we can use the following result.
    \begin{thm}[See \cite{7}, Theorem 8.18]\label{thm:2.1}
     Let $q>n$, $p\in [1, \frac{n}{n-2})$, and $g\in L^q(\R^n)$. Suppose that $L$ is a   strictly elliptic and linear differential operator of second order with bounded coefficients. Then, if $u\in W^{1,2}(\R^n)$ is a non-negative supersolution of $Lu=g$ in $B(y,4R)$, we have
     $$
     \lVert u\rVert_{L^p(B(y,2R))}\le C\left(\inf_{B(y, R)}u+ K(R)\right)\,,
     $$
      where $C>0$ is a uniform constant. \end{thm}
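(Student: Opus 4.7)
Theorem \ref{thm:2.1} is a version of the classical weak Harnack inequality for non-negative supersolutions of a linear elliptic equation with bounded coefficients. As the author notes, the result is essentially Theorem 8.18 in \cite{7}, so my plan amounts to recalling the standard Moser--John-Nirenberg strategy rather than inventing something new.

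First, I would absorb the inhomogeneity by replacing $u$ with $v=u+K(R)$, where $K(R)$ is chosen of the form $R^{2-n/q}\|g\|_{L^q(B(y,4R))}$. Since $q>n$, a standard H\"older argument shows that this choice subcritically absorbs the right-hand side and reduces the task to proving the homogeneous weak Harnack estimate
\[
\|v\|_{L^p(B(y,2R))}\le C\inf_{B(y,R)} v
\]
for the shifted function $v$. This is the step where the hypothesis $q>n$ enters in a crucial way.

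Next, testing the equation against $\eta^2/v$ for a smooth cut-off $\eta$ supported in $B(y,4R)$ and applying Cauchy--Schwarz yields a Caccioppoli-type estimate for $\log v$; combined with the Poincar\'e inequality this gives a bound on the BMO seminorm of $\log v$ over balls. By John-Nirenberg there is then a small $p_0>0$ such that
\[
\left(\int_{B(y,2R)} v^{p_0}\right)^{1/p_0}\left(\int_{B(y,2R)} v^{-p_0}\right)^{1/p_0} \le C\,|B(y,2R)|^{2/p_0}.
\]
In parallel, standard Moser iteration applied to $v^{-1}$, which satisfies a subsolution-type linear inequality, produces
\[
\sup_{B(y,R)} v^{-1} \le C\left(\frac{1}{|B(y,2R)|}\int_{B(y,2R)} v^{-p_0}\right)^{1/p_0}.
\]
Chaining the two displays gives the desired estimate for $p=p_0$, and one further Moser step in the positive direction extends the range to every $p<n/(n-2)$, where the upper threshold is exactly the Sobolev exponent governing the iteration.

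The main technical obstacle in carrying this out in full detail is the simultaneous bookkeeping for the first-order lower-order terms in $L$ and for the inhomogeneity $g$: both must be reabsorbed at every step of the iteration via weighted Young and H\"older inequalities without deteriorating the constants. The admissibility condition $q>n$ is precisely what makes $K(R)=R^{2-n/q}\|g\|_{L^q}$ a genuinely subcritical correction, and the compactness of the torus takes care of the uniform bounds on the coefficients of $L$ so that no additional regularity hypotheses are needed.
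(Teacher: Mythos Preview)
The paper does not prove Theorem~\ref{thm:2.1} at all: it is quoted verbatim as Theorem~8.18 of \cite{7} and used as a black box in the $C^0$ estimate. Your outline of the Moser--John--Nirenberg argument is the standard route to this weak Harnack inequality and is correct as a sketch, so there is nothing to compare.
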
 
      Applying Theorem \ref{thm:2.1} to $-\tilde{u}$, we obtain a local uniform upper bound on the $L^p$-norm of $\tilde{u}$. This can be extended to a global uniform upper bound on the $L^p$-norm of $\tilde{u}$ with a standard covering argument, see \cite[p.347]{9} for the details.

So, it remains to find a uniform lower bound on $Vol(P).$ First of all, we observe that, $$
\varphi(0)+\varepsilon=\min_Mu-\max_Mu+\varepsilon\le \min_{\partial B(0,1)}\varphi\, .
$$ Then, we can apply the following Proposition due to Sz\'ekelyhidi. \begin{prop}[\cite{9}, Proposition 10]\label{prop:2.3} 
Let $v\colon B(0,1)\to \R$ a smooth function such that $$
v(0)+\varepsilon \le \inf_{\partial B(0,1)}v\, .
$$ Then, there exists a dimensional constant $C>0$ such that $$
C\varepsilon^n\le \int_P\det(D^2v)\, ,
$$ where $P$ is the contact set of $v.$\end{prop}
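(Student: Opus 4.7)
The plan is to show that the gradient map $T : x \mapsto \nabla v(x)$ sends the contact set $P$ onto a set containing the ball $B(0,\varepsilon/2)$, and then to apply the area formula.

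First I would check that on $P$ the Hessian $D^2 v$ is positive semidefinite. Indeed, if $x\in P$ then the inequality $v(y)\ge v(x)+\nabla v(x)\cdot(y-x)$ for all $y\in B(0,1)$ means that $v$ sits above its tangent affine function at $x$; taking $y=x+sw$ with $w\in\R^n$ and $s\to 0$ gives $w^\top D^2 v(x)\, w\ge 0$. In particular $\det(D^2v)\ge 0$ pointwise on $P$, so the integrand in the claim is non‑negative.

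Next I would prove the key geometric inclusion $B(0,\varepsilon/2)\subset T(P)$. Fix $\xi\in\R^n$ with $|\xi|<\varepsilon/2$ and consider the auxiliary function $w_\xi(x)=v(x)-\xi\cdot x$ on the closed ball $\overline{B(0,1)}$. By compactness $w_\xi$ attains its minimum at some $x_\xi$. If $x_\xi$ were on $\partial B(0,1)$, then using $|\xi|<\varepsilon/2$ and the hypothesis $v(0)+\varepsilon\le\inf_{\partial B(0,1)}v$ one gets
\[
w_\xi(x_\xi)\ge \inf_{\partial B(0,1)} v-|\xi|\ge v(0)+\varepsilon-\tfrac{\varepsilon}{2}>v(0)=w_\xi(0),
\]
contradicting minimality. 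Hence $x_\xi$ is an interior critical point, so $\nabla v(x_\xi)=\xi$ (in particular $|\nabla v(x_\xi)|<\varepsilon/2$), and $w_\xi(y)\ge w_\xi(x_\xi)$ for all $y\in B(0,1)$ rewrites exactly as the tangent plane condition $v(y)\ge v(x_\xi)+\nabla v(x_\xi)\cdot(y-x_\xi)$. Therefore $x_\xi\in P$ and $\xi=T(x_\xi)\in T(P)$.

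Finally I would conclude by the change of variables formula. The map $T:P\to\R^n$ is $C^1$ with Jacobian $\det(D^2 v)\ge 0$ on $P$, so
\[
\int_P\det(D^2 v)\,dx\ge \int_{T(P)} dy \ge \bigl|B(0,\varepsilon/2)\bigr| = C_n\,\varepsilon^n,
\]
which is the desired inequality with $C=C_n$ a purely dimensional constant. The only delicate point is the first inequality above: strictly speaking one should invoke the area formula for Lipschitz maps (since without further convexity $T$ need not be injective, but this only makes the left‑hand side larger, so the bound still holds). This is the step I expect to require the most care; everything else is elementary.
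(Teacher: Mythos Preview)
Your argument is correct and is exactly the classical Aleksandrov--Bakelman--Pucci argument. Note, however, that the paper does not give its own proof of this proposition: it is simply quoted from Sz\'ekelyhidi \cite{9}, so there is nothing in the paper to compare against. For the record, the proof in \cite{9} proceeds exactly as you do---tilt by a linear function $\xi\cdot x$ with $|\xi|<\varepsilon/2$, use the boundary hypothesis to force the minimizer into the interior, conclude $B(0,\varepsilon/2)\subset\nabla v(P)$, and finish with the area formula. Your remark that injectivity of $\nabla v$ is not needed (the multiplicity in the area formula only helps) is the correct justification of the last step.
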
 So, by applying Proposition \ref{prop:2.3}, we obtain the following estimate
$$
C\varepsilon^n\le \int_P\det(D^2\varphi)\, .
$$On the other hand, we know that, on $P$, $\varphi$ is convex. Thanks to this, $D^2\varphi (x)\ge0$, $\forall x\in P$. Then, applying the arithmetic-geometric mean inequality, we have
$$
\det(D^2\varphi)\le \left(\frac{\Delta u}{n}\right)^{n}\,,\quad  \mbox{ on } P\,.
$$ So, in order to conclude, it is sufficient to obtain a uniform upper bound on $\Delta u$ on $P$.

First of all, the fact that, $\forall x\in P$, $D^2u(x)\ge 0$ implies that $$
\varphi_{ii}(x)\ge 0\quad \mbox{and }\quad  \varphi_{ii}(x)\varphi_{jj}(x)-\varphi^2_{ij}(x)\ge 0\,,\quad  \forall x\in P\,,\quad \forall i,j=1,\ldots, n\,.$$ Moreover, on $P$, the following inequalities hold:
$$
\lvert F(\nabla\varphi)\rvert\le \lvert X\rvert\vert\nabla\varphi\rvert<\frac{\varepsilon}{2}\lVert X \rVert_{C^0(M)}\,, \quad  \lvert G(\nabla\varphi)\rvert\le \lvert Y\rvert\vert\nabla\varphi\rvert<\frac{\varepsilon}{2}\lVert Y \rVert_{C^0(M)}\,.
$$
 Choosing  $$
 \varepsilon <\max\left\{\frac{1}{2(n-1)+\frac{5}{2}\lVert X \rVert_{C^0(M)}}, \frac{1}{2+\frac{5}{2}\lVert Y \rVert_{C^0(M)}}\right\}\,,
 $$
 we have
 $$
 \begin{aligned}
 e^{f}\ge&\,  \left(1-2\varepsilon+\varphi_{nn}+G(\nabla \varphi)-2\varepsilon G(x)\right)\left(1-2(n-1)\varepsilon+\sum_{i=1}^{n-1}\varphi_{ii}+F(\nabla\varphi)-2\varepsilon F(x)\right)-\varphi_{nn}\sum_{i=1}^{n-1}\varphi_{ii}\\
 \ge&\,  \left(1-\varepsilon\left(2+\frac{5}{2}\lVert Y\rVert_{C^0(M)}\right)+\varphi_{nn}\right)\left(1-\varepsilon\left(2(n-1)+\frac{5}{2}\lVert X\rVert_{C^0(M)}\right)+\sum_{i=1}^{n-1}\varphi_{ii}\right)-\varphi_{nn}\sum_{i=1}^{n-1}\varphi_{ii}\\
 \ge&\,  \left(1-\varepsilon\left(2+\frac{5}{2}\lVert Y\rVert_{C^0(M)}\right)\right)\sum_{i=1}^{n-1}\varphi_{ii}+\left(1-\varepsilon\left(2(n-1)+\frac{5}{2}\lVert X\rVert_{C^0(M)}\right)\right)\varphi_{nn}\,.
 \end{aligned}
 $$ From this, we obtain that $\Delta u\le C$ on $P.$
 So, we proved the following Theorem.
 \begin{thm}\label{thm:2.4}
  Let $X$ and $Y$ be two smooth vector fields such that $$
  F(\nabla v)=X^iv_i\,,\quad G(\nabla v)=Y^iv_i\,,\quad \forall v\in C^1(M)\,,
  $$ and let $u\in C_0^2(M)$ be a solution of (\ref{eq:3}). Then, there exists a uniform  constant $C>0$ such that $$
  \lVert u\rVert_{C^0(M)}\le C\,.$$
   \end{thm}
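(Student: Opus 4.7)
The plan is to follow Sz\'ekelyhidi's Aleksandrov--Bakelman--Pucci (ABP) strategy and reduce the pointwise bound to controlling the oscillation $\max_M u - \min_M u$, which dominates $\|u\|_{C^0(M)}$ thanks to the zero mean condition. Concretely, I would fix a minimum point $p$ of $u$, work in a chart centered at $p$ with domain $B(0,1)\subset \mathbb{R}^n$, and introduce the perturbed function $\varphi(x)=u(x)-\max_M u+\varepsilon|x|^2$ for a small $\varepsilon>0$ to be chosen. Defining the contact set
\[
P=\Bigl\{x\in B(0,1)\,\Big|\,|\nabla\varphi(x)|<\tfrac{\varepsilon}{2},\ \varphi(y)\ge\varphi(x)+\nabla\varphi(x)\cdot(y-x)\ \forall y\in B(0,1)\Bigr\},
\]
one reads off $\tilde u(x)\le \min_M\tilde u+\varepsilon/2$ on $P$ with $\tilde u=u-\max_M u$, and evaluating the non-increasing function $-t^p$ and integrating over $P$ for $p\in[1,n/(n-2))$ yields the key interpolation
\[
\|u\|_{C^0(M)}\le \frac{\|\tilde u\|_{L^p(M)}}{\mathrm{Vol}(P)^{1/p}}+\frac{\varepsilon}{2}.
\]

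Next I would establish a uniform upper bound on $\|\tilde u\|_{L^p(M)}$ and a uniform lower bound on $\mathrm{Vol}(P)$. For the first, inequality (\ref{eq:5}) shows that $-\tilde u\ge 0$ is a supersolution of the linear, uniformly elliptic operator $Lv=\Delta v+(F+G)(\nabla v)$, whose coefficients are controlled by $\|X\|_{C^0}$ and $\|Y\|_{C^0}$, with right-hand side $-2$; applying the Gilbarg--Trudinger weak Harnack inequality (Theorem \ref{thm:2.1}) to $-\tilde u$ on small balls, then patching via a standard covering argument on the compact manifold $M$, delivers $\|\tilde u\|_{L^p(M)}\le C$. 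For the volume bound, the easy comparison $\varphi(0)+\varepsilon\le \min_{\partial B(0,1)}\varphi$ puts us in the hypothesis of Sz\'ekelyhidi's ABP-type estimate (Proposition \ref{prop:2.3}), giving $C\varepsilon^n\le \int_P \det(D^2\varphi)$. Since $\varphi$ is convex on $P$ so that $D^2\varphi\ge 0$ there, the arithmetic--geometric mean inequality then bounds $\det(D^2\varphi)\le (\Delta u/n)^n$ on $P$.

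The main obstacle is now an upper bound on $\Delta u$ on $P$, which must be extracted from the fully nonlinear equation itself rewritten for $\varphi$. On $P$ the drift terms satisfy $|F(\nabla\varphi)|\le \tfrac{\varepsilon}{2}\|X\|_{C^0}$ and $|G(\nabla\varphi)|\le \tfrac{\varepsilon}{2}\|Y\|_{C^0}$, while $|F(x)|,|G(x)|$ are controlled by $\|X\|_{C^0},\|Y\|_{C^0}$ on $B(0,1)$. By choosing $\varepsilon$ small enough (depending only on $n$, $\|X\|_{C^0}$ and $\|Y\|_{C^0}$) so that both factors of the transformed equation retain a strictly positive order-one constant term, I would drop the nonnegative cross term $\sum_{i=1}^{n-1}\varphi_{in}^2$ and expand the product: the pointwise positivity $\varphi_{ii},\varphi_{nn}\ge 0$ on $P$ lets the leading positive constants multiply out to bound $e^f$ from below by a positive multiple of $\sum_{i=1}^{n-1}\varphi_{ii}+\varphi_{nn}=\Delta u+2n\varepsilon$, so $\Delta u\le C$ on $P$. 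Feeding this back into the ABP inequality produces the uniform lower bound on $\mathrm{Vol}(P)$, and combining all three ingredients in the interpolation above, with $\varepsilon$ fixed depending only on universal data, closes the $C^0$ estimate.
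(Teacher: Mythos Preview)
Your outline matches the paper's proof almost step for step; the only place where it diverges is the very last step, the bound on $\Delta\varphi$ on the contact set $P$, and there the argument as written does not go through.

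You say you would ``drop the nonnegative cross term $\sum_{i=1}^{n-1}\varphi_{in}^2$'' and then expand the product. But from the transformed equation
\[
\Bigl(c_1+\varphi_{nn}\Bigr)\Bigl(c_2+\sum_{i=1}^{n-1}\varphi_{ii}\Bigr)-\sum_{i=1}^{n-1}\varphi_{in}^2=e^{f},
\]
discarding the nonnegative term $\sum\varphi_{in}^2$ only yields $e^{f}\le (c_1+\varphi_{nn})(c_2+\sum_{i<n}\varphi_{ii})$, which is the wrong direction. Even if one had the opposite inequality, expanding the product produces the quadratic term $\varphi_{nn}\sum_{i<n}\varphi_{ii}$, and nothing in your argument controls it; positivity of the $\varphi_{ii}$'s alone cannot turn a product of Hessian entries into a linear bound on the trace. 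The paper closes this gap with one extra observation you omit: on $P$ one has $D^2\varphi\ge 0$, hence every $2\times 2$ principal minor is nonnegative, in particular $\varphi_{in}^2\le \varphi_{ii}\varphi_{nn}$ and therefore
\[
\sum_{i=1}^{n-1}\varphi_{in}^2\le \varphi_{nn}\sum_{i=1}^{n-1}\varphi_{ii}.
\]
Substituting this \emph{upper} bound for the cross term into the equation gives
\[
e^{f}\ge (c_1+\varphi_{nn})\Bigl(c_2+\sum_{i<n}\varphi_{ii}\Bigr)-\varphi_{nn}\sum_{i<n}\varphi_{ii}
= c_1c_2+c_1\sum_{i<n}\varphi_{ii}+c_2\,\varphi_{nn},
\]
the quadratic term cancelling exactly. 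With $\varepsilon$ chosen small so that $c_1,c_2>0$, this is the desired linear bound $\Delta\varphi\le C$ on $P$. Once you insert this step, the rest of your argument is identical to the paper's.
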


\section{Higher order estimates.}

 In this section, we  prove higher order estimates.  First of all, we  prove a $C^0$ estimate on the Laplacian of $u$. Then, we  show that all the higher order estimates can be deduced from that one. The main Theorem of this section is the following.
 \begin{thm}\label{thm:3.1}
 In the same hypothesis of Theorem \ref{thm:1.1}, let $u\in C_0^{4}(M)$ be a solution of (\ref{eq:3}). Then, there exists a  uniform constant $C>0$ such that \begin{equation}\label{eq:10}
 \lVert\Delta u\rVert_{C^0(M)}\le C(1+\lVert u\rVert_{C^1(M)})\,.
 \end{equation}\end{thm}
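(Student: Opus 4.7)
I will apply the maximum principle to an auxiliary function built from $H:=A+B$, following the strategy used for equations \eqref{eq:1} and \eqref{eq:2} in \cite{2} and \cite{6}, adapted to the present more general drifts. Since
\[A+B = 2+\Delta u + (X^i+Y^i)u_i,\]
Theorem~\ref{thm:2.4} and the smoothness of $X,Y$ reduce \eqref{eq:10} to proving a uniform bound of the form $H \le C(1+\|u\|_{C^1(M)})$. The linearization of \eqref{eq:3} at $u$ is the operator
\[\mathcal{L}v \;=\; Bv_{nn}+A\sum_{i<n}v_{ii}-2\sum_{i<n}u_{in}v_{in}+(AX^i+BY^i)v_i,\]
which is elliptic by Proposition~\ref{prop:1.3}; moreover, Lemma~\ref{lem:2.1} gives $\lambda_-\ge e^{f}/H$, so the ellipticity constant of $\mathcal{L}$ is under control whenever $H$ is controlled from above.

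A single differentiation of \eqref{eq:3} in the direction $x_k$, using hypothesis~\eqref{H:1} (so that $Y^i_{,k}\equiv 0$ and $X^i_{,n}\equiv 0$), yields the identity
\[\mathcal{L}(u_k) \;=\; f_k e^{f} - A\, X^i_{,k}\, u_i, \qquad k=1,\dots,n.\]
A second differentiation in $x_k$, summed over $k$ and combined with $\Delta u = H - 2 - (X^i+Y^i)u_i$, produces an expression for $\mathcal{L}(H)$. Besides a non-negative third-order quadratic form that is standard to absorb by Cauchy--Schwarz at an interior maximum point, this expression contains two potentially dangerous lower-order pieces: a quadratic form $-A\sum_{i,j<n}(\partial_j X^i)\,u_i u_j$, which has the good sign by hypothesis~\eqref{H:2}, and a cross term proportional to $A\sum_i Y^j X^i_{,j}\,u_i$ coming from the commutator of the $Y$-component inside $A$ with the $x$-dependent $X$-component inside $B$; this term is killed identically by hypothesis~\eqref{H:3}.

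To conclude, define $Q:=\log H + \phi(u)$ with $\phi(u)=-Ku$ and $K$ a large constant depending on $\|f\|_{C^2(M)}$, $\|X\|_{C^2(M)}$, $|Y|$ and the $C^0$-norm of $u$ (which is controlled by Theorem~\ref{thm:2.4}). At an interior maximum point $p$ of $Q$, the relations $\nabla Q(p)=0$ and $\mathcal{L}Q(p)\le 0$, combined with the computation above, reduce after standard manipulations to $H(p)\le C(1+\|u\|_{C^1(M)})$, from which \eqref{eq:10} follows.

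The main obstacle is the accounting performed in the middle paragraph: after expanding $\mathcal{L}(H)$ one must verify that every surviving term is either controlled by $H$ and $\|u\|_{C^1}$ (and thus absorbable through the choice of $\phi$), manifestly of the good sign by hypothesis~\eqref{H:2}, or identically zero by hypothesis~\eqref{H:3}. It is precisely the interlocking of the three structural conditions (hypothesis~\eqref{H:1} keeping the linearization clean, \eqref{H:2} providing a definite-sign quadratic form, and \eqref{H:3} eliminating a term of magnitude comparable to $A$ that would otherwise be impossible to absorb) that makes the standard maximum-principle scheme of \cite{2,6} extend to the present generality.
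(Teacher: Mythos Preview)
Your overall strategy is the paper's: the test function $Q=\log H-Ku$ is the logarithm of the paper's $\psi=ge^{-\varepsilon u}$ (with $g=H=A+B$), and the paper also reaches a bound of the form $g\le C(1+\lVert u\rVert_{C^1})$ and then strips off the drift. Your identification of the role of hypothesis~\eqref{H:3} is correct: it is exactly the commutator $G(\nabla F(\nabla u))=F(\nabla G(\nabla u))$ that appears when pushing the lower-order drifts through the second differentiation (the paper's \eqref{eq:18}--\eqref{eq:19}).

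Where your accounting goes wrong is the term controlled by hypothesis~\eqref{H:2}. It is \emph{not} a first-order quadratic form $-A\sum_{i,j<n}(\partial_jX^i)u_iu_j$; such a term would in any case be $O(A\lVert u\rVert_{C^1}^2)$ and not require a sign hypothesis. The actual place where \eqref{H:2} enters is a \emph{second}-derivative quadratic form produced by the mixed piece $-2\sum_{i<n}u_{in}H_{in}$ of $\mathcal L(H)$: expanding $H_{in}$ and using $X^j_{,n}=0$ leaves
\[
2\sum_{i<n}u_{in}\,F_i(\nabla u_n)\;=\;2\sum_{i,j}u_{in}\,\frac{\partial X^j}{\partial x_i}\,u_{jn}\;=\;2\,\nabla u_n\cdot\frac{\partial X}{\partial x}\,\nabla u_n\;\le\;0,
\]
which is the paper's \eqref{eq:23}. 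This term is comparable to $\sum_{i<n}u_{in}^2$ and hence to $AB-e^f$; without the sign from \eqref{H:2} it cannot be absorbed. So your ``main obstacle'' paragraph has the right spirit but the wrong bookkeeping for \eqref{H:2}.

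Two further points where your sketch is too optimistic. First, the ``non-negative third-order quadratic form that is standard to absorb by Cauchy--Schwarz'' is in fact the heart of the argument: the paper uses that at the maximum of $\psi=ge^{-\varepsilon u}$ the matrix $\eta_{ij}=\varepsilon g(u_{ij}+\varepsilon u_iu_j)-g_{ij}$ is positive semidefinite (Lemma~\ref{lem:3.2}), then feeds $\zeta_i=\operatorname{sign}(u_{in})\sqrt{\eta_{ii}}$, $\zeta_n=\sqrt{\eta_{nn}}$ into the ellipticity inequality \eqref{eq:4} and uses $|\eta_{in}|\le\sqrt{\eta_{ii}\eta_{nn}}$ to trade all third derivatives of $u$ (hidden in $g_{ij}$) for second-order data. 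Second, the \emph{linear} dependence on $\lVert u\rVert_{C^1}$ in \eqref{eq:10} comes from the specific choice $\varepsilon=1/g(p_1)$ with $p_1=\arg\max g$ (so that $\varepsilon g\le 1$ in \eqref{eq:34}); a generic ``large $K$'' would only yield a bound polynomial in $\lVert u\rVert_{C^1}$, which is still enough for the blow-up argument in Proposition~\ref{prop:3.3} but is not the stated estimate.
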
 

Before starting the proof of Theorem \ref{thm:3.1}, 
 we should observe that Theorem \ref{thm:1.1} can be considered as the generalization of both \cite[Theorem 10]{2}  and \cite[Theorem 13]{6}. Indeed, as regards the first case, we have that $Y=0$ and $$
X=\begin{pmatrix} 0 \\ 0 \\ 1\end{pmatrix}\,,
$$ while, in the second one, $X=0=Y.$

The following Lemma is a slight generalization of  \cite[Lemma 7]{6}. The proof is the same as in \cite{6} but, for completeness, we  briefly discuss it.\begin{lem}\label{lem:3.2}
Let $\varepsilon \in \R$, $g\in C^{2}(M)$ and $p_0\in M $ be the point where $$
\psi=ge^{-\varepsilon u}
$$ attains its maximum. We define $$
\eta_{ij}=\varepsilon g(u_{ij}+\varepsilon u_iu_j)-g_{ij}\,,\quad  \forall i,j=1,\ldots, n\,.
$$
 Then, $$
 \eta_{ii}(p_0)\ge 0\,,\quad  \sqrt{\eta_{ii}\eta_{jj}}\ge \lvert\eta_{ij}\rvert \quad  \mbox{ at } p_0\,.
 $$
 \end{lem}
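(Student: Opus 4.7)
The plan is to prove this by direct computation at the maximum point, exploiting the first and second order conditions $\nabla\psi(p_0)=0$ and $D^2\psi(p_0)\le 0$. The key observation will be that, after using the first order condition to eliminate the $g_i$ terms, the Hessian of $\psi$ at $p_0$ is (up to a positive factor) exactly $-\eta_{ij}$, so negativity of $D^2\psi(p_0)$ translates into positive semidefiniteness of the matrix $(\eta_{ij}(p_0))$.

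First I would compute
\[
\psi_i=(g_i-\varepsilon g u_i)e^{-\varepsilon u},
\]
so that $\nabla\psi(p_0)=0$ gives $g_i(p_0)=\varepsilon g(p_0) u_i(p_0)$ for every $i$. Differentiating once more yields
\[
\psi_{ij}=\bigl[g_{ij}-\varepsilon u_i g_j-\varepsilon u_j g_i-\varepsilon g u_{ij}+\varepsilon^2 g u_iu_j\bigr]e^{-\varepsilon u}.
\]
Substituting the first order identity $g_i=\varepsilon g u_i$ at $p_0$, the two mixed terms each produce $-\varepsilon^2 g u_iu_j$, and adding back the $+\varepsilon^2 g u_iu_j$ leaves
\[
\psi_{ij}(p_0)=\bigl[g_{ij}-\varepsilon g(u_{ij}+\varepsilon u_iu_j)\bigr]e^{-\varepsilon u(p_0)}=-\eta_{ij}(p_0)\,e^{-\varepsilon u(p_0)}.
\]

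Since $p_0$ is a maximum of $\psi$, the Hessian $(\psi_{ij}(p_0))$ is negative semidefinite. As $e^{-\varepsilon u(p_0)}>0$, this means the matrix $(\eta_{ij}(p_0))$ is positive semidefinite. Both conclusions are immediate consequences: the diagonal entries of a positive semidefinite matrix are nonnegative, giving $\eta_{ii}(p_0)\ge 0$, and the standard $2\times 2$ minor inequality for positive semidefinite matrices yields $\eta_{ij}^2\le \eta_{ii}\eta_{jj}$, that is $\sqrt{\eta_{ii}\eta_{jj}}\ge|\eta_{ij}|$ at $p_0$.

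There is no real obstacle here; the only point requiring a little care is keeping track of the two cross terms $-\varepsilon u_i g_j$ and $-\varepsilon u_j g_i$ when simplifying $\psi_{ij}$, so that after using the first order condition the answer reduces cleanly to $-\eta_{ij}$ rather than to something with residual $\varepsilon^2 u_iu_j$ terms. Once this bookkeeping is done, everything follows from the basic fact that $D^2\psi(p_0)\le 0$ at an interior maximum on the compact manifold $M$.
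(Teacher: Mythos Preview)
Your proof is correct and follows essentially the same approach as the paper: compute $\nabla\psi$ and $D^2\psi$, use the first-order condition $g_i=\varepsilon g u_i$ at $p_0$ to simplify $\psi_{ij}(p_0)$ to $-e^{-\varepsilon u}\eta_{ij}$, and then read off positive semidefiniteness of $(\eta_{ij}(p_0))$ from $D^2\psi(p_0)\le 0$. The only cosmetic difference is that the paper substitutes the first-order relation in two steps rather than all at once.
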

 \begin{proof} 
 We have $$
 \nabla \psi=e^{-\varepsilon u}\left(\nabla g-\varepsilon g\nabla u\right)\,.
 $$
  At $p_0$, we know that $\nabla\psi=0$ which implies 
  \begin{equation}\label{eq:24}
  \nabla g=\varepsilon g\nabla u\,.
  \end{equation} On the other hand, at $p_0$, using (\ref{eq:24}), we obtain that $$
  \begin{aligned}
  \psi_{ij}=&\, \varepsilon^2e^{-\varepsilon u}gu_iu_j-\varepsilon e^{-\varepsilon u}(u_ig_j+ g_iu_j+gu_{ij})+e^{-\varepsilon u}g_{ij}\\
  =&\, \varepsilon e^{-\varepsilon u} g_iu_j-\varepsilon e^{-\varepsilon u}(u_ig_j+ g_iu_j+gu_{ij})+ e^{-\varepsilon u} g_{ij}\\
  =&\, e^{-\varepsilon u} g_{ij}-\varepsilon e^{-\varepsilon u}(u_ig_j+gu_{ij})=e^{-\varepsilon u}(g_{ij}-\varepsilon g(u_{ij} +\varepsilon u_iu_j))\,.
  \end{aligned}
  $$
   So,$$
   D^2\psi=e^{-\varepsilon u}\left(D^2g-\varepsilon g(D^2u+\varepsilon \nabla u \otimes\nabla u)\right)\,.
  $$ Then, at $p_0$, $$
D^2\psi\le 0
$$ 
which implies
 $$
\varepsilon g(D^2u+\varepsilon \nabla u \otimes\nabla u)-D^2g\ge 0\,.   
$$ 
From this, the assertion follows.
\end{proof} 
The technique we used to prove Theorem \ref{thm:3.1} is an adaptation of  the one used in \cite{2} and in \cite{6}.
\begin{proof}[ Proof of Theorem \ref{thm:3.1}]
Easily, we see that
 \begin{equation}\label{eq:13}
 \begin{aligned}\Delta e^{f}+(F+G)(\nabla e^{f})=&\, A(\Delta B+(F+G)(\nabla B))+B(\Delta A+ (F+G)(\nabla A)) \\ +&\, 2 \nabla A\cdot \nabla B - 2\sum_{i=1}^{n-1}\left(u_{in}(\Delta u_{in}+(F+G)(\nabla u_{in}))+\lvert\nabla u_{in}\rvert^2\right)\,.\end{aligned}
 \end{equation}
 By straightforward calculations, we can observe that 
 \begin{equation}\label{eq:14}
  \begin{aligned}\Delta B+(F+G)(\nabla B)=&\, \sum_{i=1}^{n-1}\left(\Delta u_{ii}+(F+G)(\nabla u_{ii})\right) +(F+G)(\nabla F(\nabla u)) 
  +\sum_{j=1}^nF(\nabla u_{jj}) 
  \end{aligned}
  \end{equation}
  and
   \begin{equation}\label{eq:15}
    \Delta A+ (F+G)(\nabla A)=\Delta u_{nn}+ (F+G)(\nabla u_{nn})+\sum_{j=1}^nG(\nabla u_{jj})+ (F+G)(\nabla G(\nabla u))\,.
  \end{equation} Consider $g=A+B$ and $\varepsilon>0 $ that will be determined later. From now on, unless otherwise stated, all the quantities and inequalities will be computed at $p_0\in M$ as in Lemma \ref{lem:3.2}. Then, we can apply (\ref{eq:4}) choosing $\zeta_i=sign(u_{in})\sqrt{\eta_{ii}}, \,\, \forall i=1,\ldots,n-1$ and $\zeta_n=\sqrt{\eta_{nn}}$, obtaining that 
  \begin{equation}
  \label{eq:56} 0\le A\sum_{i=1}^{n-1}\eta_{ii}+B\eta_{nn}-2\sum_{i=1}^{n-1}\lvert u_{in}\rvert\sqrt{\eta_{ii}\eta_{nn}}\,.
  \end{equation}
  Applying Lemma \ref{lem:3.2}, we obtain
   $$
  0\le A\sum_{i=1}^{n-1}\eta_{ii}+B\eta_{nn}-2\sum_{i=1}^{n-1}\lvert u_{in}\rvert\lvert\eta_{in}\rvert \le  A\sum_{i=1}^{n-1}\eta_{ii}+B\eta_{nn}-2\sum_{i=1}^{n-1} u_{in}\eta_{in}\,.
  $$
   Now, using the definition of $\eta_{ij}$'s, we have that
   \begin{equation}\label{eq:20}\begin{aligned}
   0\le&\,  \varepsilon gA\sum_{i=1}^{n-1}\left(u_{ii}+\varepsilon u^2_i\right)+\varepsilon gB(u_{nn}+\varepsilon u_n^2)-2\varepsilon g\sum_{i=1}^{n-1}u_{in}(u_{in}+\varepsilon u_iu_n)
   - A\sum_{i=1}^{n-1}g_{ii}-Bg_{nn}+2\sum_{i=1}^{n-1}u_{in}g_{in}.\end{aligned}
   \end{equation} 
   On the other hand, we notice that
   \begin{equation}\label{eq:43}
   g_{ij}=\Delta u_{ij}+(F+G)(\nabla u_{ij})\,.
   \end{equation} 
   Substituting  (\ref{eq:43}), we obtain that 
   \begin{equation}\label{eq:16}\begin{aligned}
   -A \sum_{i=1}^{n-1}g_{ii}-Bg_{nn}+2\sum_{i=1}^{n-1}u_{in}g_{in}=&\,- A\sum_{i=1}^{n-1}\left(\Delta u_{ii}+(F+G)(\nabla u_{ii})\right)- B(\Delta u_{nn} + (F+G)(\nabla u_{nn}))\\+&\, 2\sum_{i=1}^{n-1}u_{in}(\Delta u_{in}+(F+G)(\nabla u_{in}))\,.  
   \end{aligned}
   \end{equation}
    Substituting (\ref{eq:14}) and (\ref{eq:15}) in (\ref{eq:16}), it holds that 
     \begin{equation}\label{eq:17} 
     \begin{aligned}
     -&\,A \sum_{i=1}^{n-1}g_{ii}-Bg_{nn}+2\sum_{i=1}^{n-1}u_{in}g_{in}=- A(\Delta B+(F+G)(\nabla B))
      -  B(\Delta A + (F+G)(\nabla A))\\+&\,2\sum_{i=1}^{n-1}u_{in}(\Delta u_{in}+(F+G)(\nabla u_{in})) 
+ A\left(\Delta F(\nabla u)+ (F+G)(\nabla F(\nabla u))
       \right)\\
       +&\, B(\Delta G(\nabla u)+ (F+G)(\nabla G(\nabla u)))\,.
     \end{aligned}
     \end{equation}
      On the other hand, by straightforward calculation, we see that 
      \begin{equation}\label{eq:18}
      \Delta F(\nabla u)+ (F+G)(\nabla F(\nabla u))
      =F(\nabla g)\end{equation}and \begin{equation}\label{eq:19} \Delta G(\nabla u)+ (F+G)(\nabla G(\nabla u)))=G(\nabla g)\,.
      \end{equation} 
      Then, substituting (\ref{eq:18}) and (\ref{eq:19}) in (\ref{eq:17}) and substituting the result in (\ref{eq:20}), we obtain 
      \begin{equation}\label{eq:21}
      \begin{aligned}
      &\,A(\Delta B+(F+G)(\nabla B))+  B(\Delta A + (F+G)(\nabla A))-2\sum_{i=1}^{n-1}u_{in}(\Delta u_{in}+(F+G)(\nabla u_{in})) \\
       \le&\, \varepsilon gA\sum_{i=1}^{n-1}(u_{ii}+\varepsilon u^2_i)+\varepsilon gB(u_{nn}+\varepsilon u_n^2)-2\varepsilon g\sum_{i=1}^{n-1}u_{in}(u_{in}+\varepsilon u_iu_n)
       +AF(\nabla g)+ BG(\nabla g)\,.
       \end{aligned}
       \end{equation}
       This inequality can be substituted in (\ref{eq:13}) obtaining that
       \begin{equation}\label{eq:22}
       \begin{aligned}
       \Delta e^{f}+ (F+G)(\nabla e^{f})\le&\,\varepsilon gA\sum_{i=1}^{n-1}(u_{ii}+\varepsilon u^2_i)+\varepsilon gB(u_{nn}+\varepsilon u_n^2)-2\varepsilon g\sum_{i=1}^{n-1}u_{in}(u_{in}+\varepsilon u_iu_n)\\
       +&\, 2\nabla A\cdot \nabla B+ AF(\nabla g)+ BG(\nabla g)\,.
       \end{aligned}
      \end{equation}
        Moreover, thanks to Lemma \ref{lem:3.2} and to (\ref{eq:24}), at $p_0$, we have $$
       \lvert\nabla g\rvert^2=\varepsilon^2g^2\lvert \nabla u\rvert^2\,.
       $$
        On the other hand,
        \begin{equation}\label{eq:26}
        \varepsilon^2g^2\lvert \nabla u\rvert^2=\lvert\nabla g\rvert^2=\lvert\nabla (A+B)\rvert^2=\lvert\nabla A\rvert^2+\lvert\nabla B\rvert^2+2\nabla A\cdot\nabla B\ge2\nabla A\cdot\nabla B \,.
        \end{equation}
        Moreover, at $p_0$,  since (\ref{eq:24}) holds true, we have  
        \begin{equation}\label{eq:27}
        F(\nabla g)=\varepsilon g F(\nabla u)\,,\quad G(\nabla g)=\varepsilon g G(\nabla u)\,.
        \end{equation}
        Then, using  (\ref{eq:26}) and (\ref{eq:27}), we can obtain the following inequality
   $$
   \begin{aligned}
        &\, \Delta e^{f}+ (F+G)(\nabla e^{f})\le\varepsilon gA\sum_{i=1}^{n-1}(u_{ii}+\varepsilon u^2_i)+\varepsilon gB(u_{nn}+\varepsilon u_n^2) \\ 
         -&\, 2\varepsilon g\sum_{i=1}^{n-1}u_{in}(u_{in}+\varepsilon u_iu_n) +\varepsilon^2g^2\lvert \nabla u\rvert^2+ A\varepsilon g F(\nabla u)+ B\varepsilon gG(\nabla u) \\
=&\, \varepsilon g\left(A\left(B-1\right)+B(A-1)-2\sum_{i=1}^{n-1}u^2_{in}\right)+\varepsilon^2g\left(A\sum_{i=1}^{n-1}u_i^2+Bu_n^2-2\sum_{i=1}^{n-1}u_{in}u_iu_n\right) +\varepsilon^2g^2\lvert\nabla u\rvert^2\,. 
\end{aligned}
$$
 From the inequality above, using equation (\ref{eq:equazione}), we obtain
 \begin{equation}\label{eq:28}
 \Delta e^{f}+(F+G)(\nabla e^{f})\le 2\varepsilon ge^{f}-\varepsilon g^2+\varepsilon^2g\left(A\sum_{i=1}^{n-1}u_i^2+Bu_n^2-2\sum_{i=1}^{n-1}u_{in}u_iu_n\right)+\varepsilon^2g^2\lvert\nabla u\rvert^2\,.
 \end{equation}
 Choosing $\zeta_i=u_i,\,\,\,\forall i=1,\ldots, n-1$,  and $\zeta_n=-u_n$, thanks to (\ref{eq:4}), we easily obtain that 
 \begin{equation}\label{eq:29}
 -2\sum_{i=1}^{n-1}u_{in}u_iu_n\le A\sum_{i=1}^{n-1}u_i^2+Bu_n^2\le g\lvert\nabla u\rvert^2\,.
 \end{equation} 
 Then, substituting (\ref{eq:29}) in (\ref{eq:28}), we have $$
 \Delta e^{f}+(F+G)(\nabla e^{f})\le2\varepsilon ge^{f}-\varepsilon g^2+2\varepsilon^2g^2\lvert\nabla u\rvert^2\,,
 $$ which is equivalent to
  \begin{equation}\label{eq:30}
  \varepsilon g^2\le-(\Delta e^{f}+ (F+G)(\nabla e^{f}))+ 2\varepsilon ge^{f}+3\varepsilon^2g^2\lvert\nabla u\rvert^2\,.
  \end{equation}
   So, we obtain that 
   \begin{equation}\label{eq:34}
   \varepsilon g^2\le\lVert(\Delta e^{f}+(F+G)(\nabla e^{f}))\rVert_{C^0(M)}+ 2\varepsilon g\lVert e^{f}\rVert_{C^0(M)}+3\varepsilon^2g^2\lvert\nabla u\rvert^2\,.
   \end{equation} 

Consider, now, $p_1\in M$ to be the point where $g$ attains its maximum, $\psi$ as in Lemma \ref{lem:3.2} and $$
\varepsilon=\frac{1}{g(p_1)}\,.
$$
 Then, we have $$
 g(p_0)\le g(p_1)=\psi(p_1)e^{\varepsilon u(p_1)}\le \max_M\psi e^{\varepsilon u(p_1)}=g(p_0)e^{\varepsilon(u(p_1)-u(p_0))}\le g(p_0) e^{2\varepsilon \lVert u\rVert_{C^0(M)}}\,.
 $$
 Moreover, using (\ref{eq:31}), we observe that $$
 2\varepsilon\le \frac{1}{e^{\frac{f}{2}}}\le e^{-\min_M\frac{f}{2}}\,.
 $$
 Furthermore, we notice that
  \begin{equation}\label{eq:32}
  \exp\left(-e^{-\min_M\frac{f}{2}}\lVert u\rVert_{C^0(M)}\right)\le \exp(-2\varepsilon\lVert u\rVert_{C^0(M)})=\varepsilon g(p_1)e^{-2\varepsilon\lVert u\rVert_{C^0(M)}}\le \varepsilon g(p_0)\,.
  \end{equation} 
  Multiplying (\ref{eq:32}) by $g(p_1)$, we obtain that

\begin{equation}\label{eq:33}
\exp\left(-e^{-\min_M\frac{f}{2}}\lVert u\rVert_{C^0(M)}\right)g(p_1)
\le g(p_0)\,.
\end{equation}
Again, multiplying (\ref{eq:32}) and (\ref{eq:33}), we have that
$$
 \exp\left(-2e^{-\min_M\frac{f}{2}}\lVert u\rVert_{C^0(M)}\right)g(p_1)\le \varepsilon g(p_0)^2\,,
 $$
  which, thanks to (\ref{eq:34}) and observing that $\varepsilon g(p_0)\le 1$, guarantees the following
  \begin{equation}\label{eq:35} 
  \exp\left(-2e^{-\min_M\frac{f}{2}}\lVert u\rVert_{C^0(M)}\right)g(p_1)\le  \lVert(\Delta e^{f}+(F+G)(\nabla e^{f}))\rVert_{C^0(M)}+ 2 \lVert e^{f}\rVert_{C^0(M)}+3\lVert\nabla u\rVert_{C^0(M)}\,.
  \end{equation} 
  From (\ref{eq:35}), it is easy, using Theorem \ref{thm:2.4}, to deduce a uniform upper bound for $g$ as follows
  \begin{equation}\label{eq:36}
  2+\Delta u +(F+G)(\nabla u )=g\le C(1+\lVert u\rVert_{C^1(M)})\,.\end{equation}
  In order to conclude, it is sufficient to recall that $$
  \lvert F(\nabla u)\rvert\le \lvert X\rvert\lVert u\rVert_{C^{1}(M)}\,,\quad \lvert G(\nabla u)\rvert\le \lvert Y\rvert\lVert u\rVert_{C^{1}(M)}\,.
  $$
   From these and (\ref{eq:36}), the assertion follows.

\end{proof}
\begin{rmk}\label{cutto}
The argument used in the proof of Theorem \ref{thm:3.1} works  assuming slightly weaker hypothesis, which are the following: $Y$ is constant and X does not depend on $x_n$, the matrix $\frac{\partial X}{\partial x}=(\frac{\partial X^i}{\partial x_j})_{i,j}$ is negative semidefinite and   $$
0=\sum_{i=1}^{n-1}Y^i\frac{\partial X^j}{\partial x_i}\,, \quad \forall j=1,\ldots, n-1\,.
$$ However, assuming that $\frac{\partial X}{\partial x}$ is negative semidefinite immediately guarantees that $X$ is constant.  Indeed, thanks to the Sylvester's criterion, we have that 
\begin{equation}\label{groda}
\frac{\partial X^i}{\partial x_i}\le 0\,, \quad \frac{\partial X^i}{\partial x_i}\frac{\partial X^j}{\partial x_j}- \left(\frac{\partial X^j}{\partial x_i}\right)^2\ge 0\,, \quad \forall i,j=1,\ldots, n\,.
\end{equation}  From the first condition in \eqref{groda}, we deduce that, for all $i=1,\ldots,n$, the function $X^i$, as a function of $x_i$ only, is a $1$-periodic function which happens to be non-increasing, that is possible if and only if $X^i$ is constant with respect to $x_i$. This guarantees that 
$$
\frac{\partial X^i}{\partial x_i}=0\,,\quad \forall i=1,\ldots, n\,.
$$ Using this in the second condition in \eqref{groda}, we obtain that 
$$
\frac{\partial X^j}{\partial x_i}=0\,, \quad \forall i,j=1,\ldots, n\,,
$$ giving us the claim. 
\end{rmk}
Using the estimate (\ref{eq:10}), we succeed to prove the $C^1$ estimate. The technique is analogous to the one used both in \cite[Theorem 9]{6} and \cite[Theorem 7]{2}. However, for completeness, we give the details of the proof.
\begin{prop}\label{prop:3.3}
In the same hypothesis of Theorem \ref{thm:1.1}, let $u\in C_0^{4}(M)$ be a solution of equation (\ref{eq:3}). Then, there exists a uniform constant $C>0$ such that 
\begin{equation}\label{eq:37}
\lVert u\rVert_{C^1(M)}\le C\,.
\end{equation}
\end{prop}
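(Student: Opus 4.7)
The plan is to bootstrap the Laplacian estimate \eqref{eq:10} from Theorem \ref{thm:3.1} together with the $C^0$ bound of Theorem \ref{thm:2.4} into the desired gradient bound by means of a standard elliptic interpolation argument. First, Theorem \ref{thm:2.4} yields $\lVert u\rVert_{C^0(M)}\le C$. Since $u\in C_0^4(M)$ has zero mean, the Calder\'on--Zygmund $L^p$-estimates for the Laplacian on the flat torus combined with \eqref{eq:10} give, for every $p\in(1,\infty)$,
\[\lVert u\rVert_{W^{2,p}(M)}\le C_p\bigl(\lVert\Delta u\rVert_{L^p(M)}+\lVert u\rVert_{L^p(M)}\bigr)\le C_p\bigl(1+\lVert u\rVert_{C^1(M)}\bigr).\]

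Next I would fix a value $p>n$ and invoke the Gagliardo--Nirenberg interpolation inequality, which gives
\[\lVert\nabla u\rVert_{L^\infty(M)}\le C\,\lVert u\rVert_{W^{2,p}(M)}^{\theta}\,\lVert u\rVert_{L^\infty(M)}^{1-\theta},\]
with exponent $\theta=\tfrac{p}{2p-n}\in(0,1)$, as dictated by the scaling of the two norms involved. Combining the two displays with the $C^0$ bound, and using $\lVert u\rVert_{C^1(M)}\le C+\lVert\nabla u\rVert_{C^0(M)}$, one arrives at an inequality of the shape
\[\lVert\nabla u\rVert_{C^0(M)}\le C\bigl(1+\lVert\nabla u\rVert_{C^0(M)}\bigr)^{\theta}.\]

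Since $\theta<1$, Young's inequality permits the term $\lVert\nabla u\rVert_{C^0(M)}$ on the right to be absorbed into the left-hand side, producing a uniform bound on $\lVert\nabla u\rVert_{C^0(M)}$ and hence the desired estimate \eqref{eq:37}.

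The main point, and the only genuine subtlety, is the strict inequality $\theta<1$: this is precisely what breaks the circularity in the bound $\lVert\Delta u\rVert_{C^0(M)}\le C(1+\lVert u\rVert_{C^1(M)})$, and it is the reason one must choose $p$ strictly larger than the dimension $n$. No additional use of the structural hypotheses \ref{H:1}), \ref{H:2}), \ref{H:3}) is needed beyond what already went into Theorem \ref{thm:3.1}; this is also why the argument is formally identical to the ones in \cite[Theorem 9]{6} and \cite[Theorem 7]{2}.
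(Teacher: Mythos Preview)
Your argument is correct and is one of the two standard routes to this type of gradient bound. The paper does not spell out a proof and simply refers to \cite[Theorem~9]{6} and \cite[Theorem~7]{2}; in those references the estimate is obtained by a blow-up (rescaling) argument rather than by interpolation: assuming a sequence of solutions with $\lVert\nabla u_k\rVert_{C^0}=M_k\to\infty$ attained at points $p_k$, one sets $v_k(x)=u_k(p_k+x/M_k)$, uses \eqref{eq:10} to get $|\Delta v_k|\le C M_k^{-2}(1+M_k)\to 0$, and extracts a $C^1_{\mathrm{loc}}$ limit $v$ on $\mathbb{R}^n$ that is bounded (by Theorem~\ref{thm:2.4}), harmonic, and satisfies $|\nabla v(0)|=1$, contradicting Liouville's theorem. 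Your direct route via Calder\'on--Zygmund and Gagliardo--Nirenberg is a genuine alternative: it is fully quantitative and avoids the contradiction/compactness machinery, while the blow-up argument has the virtue of not requiring the precise interpolation exponent. Both approaches exploit the same structural feature, namely that the right-hand side of \eqref{eq:10} grows strictly sublinearly in $\lVert\nabla u\rVert_{C^0}^2$, which is exactly what makes the absorption (in your proof) or the vanishing of $\Delta v_k$ (in the blow-up proof) possible.
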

\begin{proof}
We fix $p>n$ and, thanks to the Morrey's inequality for $W^{2,p}(M)$, we have that there exists a constant $C$,  depending only on $M$, such that, for a certain $\alpha\in (0,1)$, $$
\lVert u\rVert_{C^{1,\alpha}(M)}\le C\lVert u\rVert_{W^{2,p}(M)}\,.
$$ On the other hand, thanks to \cite[Theorem 9.11]{7}, there exists a constant $C>0$ such that 
$$
\lVert u\rVert_{W^{2,p}(M)}\le C(\lVert u\rVert_{L^p(M)}+\lVert \Delta u\rVert_{L^p(M)})\le C(\lVert u\rVert_{C^0(M)}+\lVert \Delta u\rVert_{C^0(M)})\le C(1+\lVert u\rVert_{C^1(M)})\,.
$$
 Using the standard interpolation theory, see \cite[Section 6.8]{7},  we know that, $\forall \varepsilon >0$,  there exists $P_{\varepsilon}>0$ such that
 $$
 \lVert u\rVert_{C^1(M)}\le P_{\varepsilon }\lVert u\rVert_{C^0(M)}+\varepsilon \lVert u\rVert_{C^{1,\alpha}(M)}\,.
 $$Then, we obtain $$
 \lVert u\rVert_{C^1(M)}\le CP_{\varepsilon}+\varepsilon C'(1+\lVert u\rVert_{C^1(M)})\,.
 $$ Choosing $\varepsilon<\frac{1}{C'}$, the claim follows.
\end{proof}
 As a direct corollary of Proposition \ref{prop:3.3} and Theorem \ref{thm:3.1}, we obtain the uniform $C^0$ bound on the Laplacian of $u$. 
 \begin{thm}\label{thm:3.4}
 In the same hypothesis of Theorem \ref{thm:1.1}, let $u\in C_0^{4}(M)$ be a solution of equation (\ref{eq:3}). Then, there exists a uniform constant $C>0$ such that 
 \begin{equation}\label{eq:37}
 \lVert \Delta u\rVert_{C^0(M)}\le C\,.
 \end{equation}
 \end{thm}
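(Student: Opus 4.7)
The plan is to derive Theorem \ref{thm:3.4} as an immediate consequence of the two previous results in this section, without any new analytic input. The structure of the argument mirrors how similar $C^2$ estimates are finalized in \cite{2} and \cite{6}: one first proves a conditional Laplacian estimate depending on $\|u\|_{C^1}$, then upgrades it by inserting a $C^1$ bound.

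Concretely, I would first invoke Proposition \ref{prop:3.3} to produce a uniform constant $C_1>0$ such that $\|u\|_{C^1(M)}\le C_1$. Then I would feed this bound into the right-hand side of the estimate \eqref{eq:10} of Theorem \ref{thm:3.1}, which provides a uniform constant $C_2>0$ with
\[
\|\Delta u\|_{C^0(M)}\le C_2(1+\|u\|_{C^1(M)}).
\]
Setting $C:=C_2(1+C_1)$ gives the desired conclusion. Since both ingredients are already stated under the hypotheses of Theorem \ref{thm:1.1}, no additional appeal to the structure of $X$, $Y$, or of $f$ is required at this stage.

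There is essentially no obstacle in this step: the two nontrivial arguments (namely, the maximum principle argument at the point where $g=A+B$ attains its maximum that underlies Theorem \ref{thm:3.1}, and the gradient estimate of Proposition \ref{prop:3.3} modeled on \cite[Theorem~9]{6} and \cite[Theorem~7]{2}) have already been carried out. The only conceptual point worth flagging in the write-up is that Proposition \ref{prop:3.3} must be applied \emph{before} Theorem \ref{thm:3.1}, so that the $C^1$ norm appearing on the right-hand side of \eqref{eq:10} is genuinely controlled by a constant depending only on the data $(f,X,Y,n)$ and not on $u$ itself.
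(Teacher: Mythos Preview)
Your proposal is correct and matches the paper's own argument exactly: the paper states Theorem \ref{thm:3.4} ``as a direct corollary of Proposition \ref{prop:3.3} and Theorem \ref{thm:3.1}'', i.e., one inserts the uniform $C^1$ bound from Proposition \ref{prop:3.3} into the estimate \eqref{eq:10} of Theorem \ref{thm:3.1}. Nothing further is needed.
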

  Finally, the $C^{2,\alpha}$ estimate can be deduced, in a standard way, by \cite[Theorem 1.1]{11} using the same argument as in \cite{10}. An important condition to obtain  the $C^{2,\alpha}$ estimate is the concavity of the equation with respect to the second order derivatives of $u$. In order to prove this, first of all,  as done before,  we define  the quantities:
  $$
  A(T,Z)=1+T_{nn}+G(Z)\,, \quad B(T,Z)=1+\sum_{i=1}^{n-1}T_{ii}+F(Z)\,,\quad \forall (T,Z)\in  C^{\infty}(M, {\rm Sym}^2T^*M)\times C^{\infty}(M, TM)\,,
  $$where $C^{\infty}(M, {\rm Sym}^2T^*M)$ and  $C^{\infty}(M, TM)$ are, respectively,  the set of  smooth symmetric $2$-tensors on $M$ and the set of smooth vector fields on $M$. Moreover, we define the set 
  $$ \Gamma=\left\{ (T,Z)\in  C^{\infty}(M, {\rm Sym}^2T^*M)\times C^{\infty}(M, TM)\,\, \middle|\,\, A(T,Z)B(T,Z)-\sum_{i=1}^{n-1}T_{in}^2>0\right\}\,. $$ 
  We notice that $\Gamma$ is convex fiberwise, i.e. fixed $Z\in C^{\infty}(M, TM)$, for all $T,S\in C^{\infty}(M, {\rm Sym}^2T^*M)$ such that $(A,Z), (B, Z)\in \Gamma$ we have that $(tT+(1-t)S, Z)\in \Gamma$, $\forall t\in [0,1] $.  Indeed, we observe that 
  \begin{equation}\label{gigi}
  \begin{aligned}
  A&\, (tT+(1-t)S, Z)B(tT+(1-t)S, Z)- \sum_{i=1}^{n-1}(tT_{in}+(1-t)S_{in})^2\\
  >&\,  t(1-t)\left(A(S,Z)B(T,Z)+A(T,Z)B(S,Z)- 2\sum_{i=1}^{n-1}S_{in}T_{in}\right)\,
  \end{aligned}
 \end{equation} On the other hand, using Cauchy-Schwarz inequality, we have that 
  \begin{equation}\label{il}
  \sum_{i=1}^{n-1}S_{in}T_{in}\le \left(\sum_{i=1}^{n-1}S_{in}^2\right)^{\frac12}\left(\sum_{i=1}^{n-1}T_{in}^2\right)^{\frac12}< (A(T,Z)A(S,Z)B(T, Z)B(S, Z))^{\frac12}\,.
  \end{equation}
  Using \eqref{il} into \eqref{gigi}, we obtain that 
  $$
  \begin{aligned}A&\, (tT+(1-t)S, Z)B(tT+(1-t)S, Z)- \sum_{i=1}^{n-1}(tT_{in}+(1-t)S_{in})^2\\
  >&\, \left((A(S, Z)B(T,Z))^{\frac12}- (A(T,Z)B(S,Z))^{\frac12}\right)^2\ge 0\,, 
  \end{aligned}
  $$giving us the claim.
Now, we consider the following function: 
  $$
  \tilde{\Phi}(T,Z)=\log\left(A(T,Z)B(T,Z)-\sum_{i=1}^{n-1}T_{in}^2\right)- f\,,\quad \forall (T,Z)\in \Gamma\,.
  $$
 In order to prove the concavity of the function $\tilde \Phi$, we define another function:
 $$
 G(T,Z)=\begin{pmatrix}
 A(T,Z) & -\lvert v^T \rvert\\-\lvert v^T \rvert& B(T,Z)
 \end{pmatrix}\,,
 $$ where $v^T=(T_{1n},\ldots, T_{n-1n})$. Clearly, we have that 
 $$
 \tilde{\Phi}(T, Z)=\log\det G(T,Z)- f\,, \quad \forall (T,Z)\in \Gamma\,. 
 $$  Next,  we observe that, fixed $(T,Z),(S,Z)\in \Gamma$,  
 \begin{equation}\label{emma}
 \det( G(tT+(1-t)S, Z))\ge \det(tG(T,Z)+(1-t)G(S,Z))\,, \quad \forall t\in [0,1]\,.
 \end{equation}
   Indeed, we have that 
   $$
   \begin{aligned}
    \det &\, ( G(tT+(1-t)S, Z))-  \det(tG(T,Z)+(1-t)G(S,Z))\\
   =&\,  (t\lvert v^T\rvert+ (1-t)\lvert v^S\rvert)^2- \lvert v^{tT+(1-t)S}\rvert^2\ge 0\,.
    \end{aligned}
   $$
    Now,  applying  the function $x\mapsto x^{\frac12} $, which  is increasing, to \eqref{emma} and using the fact that $\det^{\frac12}$ is concave  on positive definite matrices,  we obtain 
    \begin{equation}\label{iala}
    (\det( G(tT+(1-t)S, Z)))^{\frac12}\ge \det(tG(T,Z)+(1-t)G(S,Z))^{\frac12}\ge t\det(G(T,Z))^{\frac12}+ (1-t)\det(G(S,Z))^{\frac12}\,.
    \end{equation} Finally, applying the function $\log$ to \eqref{iala}  and using the fact that it is concave, we obtain the claim.  
      
  Then, by a standard bootstrap argument, we proved the following. 
  \begin{thm}\label{thm:3.5}
   In the same hypothesis of Theorem \ref{thm:1.1}, let $u\in C_0^{4}(M)$ be a solution of equation (\ref{eq:3}). Then, $u\in C^{\infty}(M),$ and, $\forall k\ge 0$, there exists a uniform constant $C_k>0$ such that $$
   \lVert u\rVert_{C^k(M)}\le C_k\,.
   $$
   \end{thm}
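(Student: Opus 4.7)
The plan is to combine all the a priori estimates already assembled and then run a standard Evans--Krylov plus Schauder bootstrap. From Theorem \ref{thm:2.4}, Proposition \ref{prop:3.3} and Theorem \ref{thm:3.4} we already have uniform bounds on $\lVert u\rVert_{C^0(M)}$, $\lVert u\rVert_{C^1(M)}$ and $\lVert \Delta u\rVert_{C^0(M)}$. The first task is to show that equation (\ref{eq:3}) is uniformly elliptic along any $C^4_0$ solution. Upper bounds on the entries of $D^2u$ follow from the $C^0$ bound on $\Delta u$ together with positivity of $A$ and $B$: since $A,B>0$ and $AB\le (\frac{A+B}{2})^2$, while $A+B = 2+\Delta u+(F+G)(\nabla u)$ is bounded, we control $A,B$ from above, and then the identity $\sum_{i=1}^{n-1}u_{in}^2 = AB-e^f$ bounds the mixed second derivatives. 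Combined with the $C^1$ bound, this gives $\lVert D^2u\rVert_{C^0(M)}\le C$. For the lower ellipticity constant we use Proposition \ref{prop:1.3}: the smallest eigenvalue is $\lambda_-=\tfrac{1}{2}(A+B-\sqrt{(A+B)^2-4e^f})$, and from Lemma \ref{lem:2.1} together with the upper bound on $A+B$ one obtains a uniform positive lower bound $\lambda_-\ge c>0$.

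The second step is to upgrade from $L^\infty$ control of $D^2u$ to a Hölder bound. The operator $\Phi(D^2u)=AB-\sum_{i=1}^{n-1}u_{in}^2$ (with $F,G$ linear in the gradient) can be written, after composition with $\log$, as a concave function of the Hessian on the elliptic cone $\{A>0,\,B>0,\,\Phi>0\}$ — this is the same structural feature used in \cite{2} and \cite{6} and is precisely the setting in which the Evans--Krylov theorem of \cite{11} applies. Applying \cite[Theorem 1.1]{11} locally on coordinate balls of $M$, along the lines of the argument of \cite{10}, yields a uniform interior estimate
\[
\lVert u\rVert_{C^{2,\alpha}(M)}\le C
\]
for some $\alpha\in(0,1)$, where $C$ depends only on the already established bounds and on $f$.

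The final step is the bootstrap. Differentiating equation (\ref{eq:3}) in a coordinate direction $x_k$ produces a linear elliptic equation for $w=u_k$ of the form
\[
a^{ij}(x)\,w_{ij}+b^i(x)\,w_i = h(x),
\]
where $a^{ij}$ are the entries of the matrix $P_u$ of Proposition \ref{prop:1.3} (possibly slightly modified by $F$, $G$), $b^i$ depends smoothly on $X$, $Y$ and $\nabla u$, and $h$ is expressed in terms of $f_k$ and lower-order data. Thanks to the uniform $C^{2,\alpha}$ bound on $u$, the coefficients $a^{ij}$ and $b^i$ lie in $C^{0,\alpha}(M)$ and the equation is uniformly elliptic, so interior Schauder estimates give $\lVert w\rVert_{C^{2,\alpha}(M)}\le C$, i.e.\ $\lVert u\rVert_{C^{3,\alpha}(M)}\le C$. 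Iterating this argument (each additional derivative of the equation yields coefficients and right-hand side of strictly higher Hölder regularity) one obtains $\lVert u\rVert_{C^{k,\alpha}(M)}\le C_k$ for every $k\ge 0$, whence $u\in C^\infty(M)$.

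The main obstacle is the first upgrade, from $\lVert D^2u\rVert_{C^0}\le C$ to $\lVert u\rVert_{C^{2,\alpha}}\le C$: this requires the concavity/convexity structure of $\Phi$ on the admissible cone and an application of Evans--Krylov in the form of \cite[Theorem 1.1]{11}. Once this is in hand, the Schauder bootstrap is entirely standard and the smoothness conclusion is immediate.
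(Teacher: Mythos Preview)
Your approach is essentially the paper's own: the paper simply records that the $C^{2,\alpha}$ estimate follows ``in a standard way'' from \cite[Theorem 1.1]{11} via the argument of \cite{10}, and that smoothness and all higher estimates then follow by a standard bootstrap. Your write-up fills in the same outline with more detail.

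One inaccuracy worth flagging: from the bounds on $A$, $B$ and the identity $\sum_{i=1}^{n-1}u_{in}^{2}=AB-e^{f}$ you do control $u_{nn}$, the sum $\sum_{i<n}u_{ii}$, and each $u_{in}$ for $i<n$, but you do \emph{not} obtain bounds on the individual $u_{ii}$ for $i<n$, nor on $u_{ij}$ with $i,j<n$ and $i\neq j$. Hence your claim $\lVert D^{2}u\rVert_{C^{0}(M)}\le C$ is not justified by the argument you give (and indeed a mere bound on $\Delta u$ never yields a full Hessian bound). This is harmless for the actual scheme: what \cite[Theorem 1.1]{11} requires is uniform ellipticity and a two-sided bound on the tensor through which the operator sees $D^{2}u$---here the matrix $P_{u}$ of Proposition~\ref{prop:1.3}---and all entries of $P_{u}$ are precisely $A$, $B$ and the $u_{in}$, which you have bounded. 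So the input to \cite{11} is in place; just replace the sentence about $\lVert D^{2}u\rVert_{C^{0}}$ with the statement that the eigenvalues of $P_{u}$ lie in a fixed compact subset of $(0,\infty)$.
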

\section{Proof of Theorem \ref{thm:1.1}.}
Once we obtained the uniform a priori estimates,  we are ready to prove Theorem \ref{thm:1.1}.\begin{proof}[Proof of Theorem \ref{thm:1.1}] Consider, $\forall t\in [0,1]$, the equation
$$
\Phi_t(u)=AB-\sum_{i=1}^{n-1}u^2_{in}-e^{f_t}=0\,,
$$
where $f_t=\log(1-t+te^{f})$ and define
$$
T=\{t\in [0,1]\quad  |\quad  \Phi_{t}(u)=0\,\quad \text{admits a solution  }u\in C_0^{2,\alpha}(M)\}\,.
$$ 
Obviously, $\Phi_0(u)=0$ admits a solution which is $u=0$. So, $T\ne\emptyset.$

 Then, fix $t\in T$ and consider $u\in C_0^{2,\alpha}(M)$ to be a solution of $\Phi_t(u)=0.$ Observe that, $\forall t\in [0,1]$,
 $$
 \Phi_t\colon V \to W
 $$ where $$
 V=\left\{v\in C^{2,\alpha}(M) \quad \middle|\quad  \int_MvdV=0\right\}=C_0^{2,\alpha}(M)\,,\quad W=C^{0,\alpha}(M)\,.
 $$
We have already computed the linearization at $u$ of $\Phi_t$ which is 
$$
d_u\Phi_t(v)=Bv_{nn}+ A\sum_{i=1}^{n-1}v_{ii}-2\sum_{i=1}^nu_{in}v_{in}+A F(\nabla v)+BG(\nabla v)\,,\quad \forall v\in T_uV\simeq V\,.
$$
  So, $d_u\Phi_t$ is a linear elliptic operator of second order without terms of order zero. Then, the strong maximum principle implies that $d_u\Phi_t$ is injective and, moreover, it has closed image. On the other hand, the symbol of $d_u\Phi_t$ is invertible and positive, thanks to Proposition \ref{prop:1.3}. Then, choosing $\{x_1,\ldots, x_n\}$ local coordinates, we can write
  $$
  d_u\Phi_t(v)=\Theta^{ij}v_{ij}+c^iv_i
  $$
  where  $c^i\in C^{\infty}(M),$ $\forall i =1,\ldots, n$ and $(\Theta^{ij})_{i,j}$ is positive, so, its inverse defines a riemannian metric on $M$. This implies that $$
  d_u\Phi_t(v)=\Delta_{\Theta}v+c^iv_i\,.
  $$
   In particular, the index of $d_uF_t$ coincides with that of $\Delta_{\Theta}$ which is zero. The injectivity of $d_u\Phi_t$ and this fact imply that $\ker((d_u\Phi_t)^*)=\{0\}$. We conclude observing that $$
    Im(d_u\Phi_t)=\overline{Im (d_u\Phi_t)}=\ker((d_u\Phi_t)^*)^{\bot}=C^{0,\alpha}(M)\,.
    $$
     So, $d_u\Phi_t$ is invertible. Applying the implicit function Theorem, we obtain that $T$ is open. Thanks to Theorem \ref{thm:3.5}, $T$ is also closed. From this, the existence of a solution follows. It remains to prove the uniqueness of the solution. Then, suppose that $u,v\in C_0^{\infty}(M)$ are two solutions of (\ref{eq:3}) and denote with $g\in C^{\infty}(M)$ the function such that $u=v+g.$ Clearly, since $\int_MudV=\int_M vdV=0$, then $\int_M gdV=0.$ By straightforward calculations, we notice that 
     $$
     e^{f}=(A_v+A_g-1)\left(B_v+B_g-1\right)-\sum_{i=1}^{n-1}(v_{in}^2+2v_{in}g_{in}+g_{in}^2)\,,
     $$
      which can be rewritten as follows:
      $$
      e^{f}=e^{f}+1+A_gB_g-\sum_{i=1}^{n-1}g_{in}^2-(A_g+B_g)+d_v\Phi(g)\,.
      $$
      Then, from this, we have that 
      \begin{equation}\label{eq:39}
      1+A_gB_g-\sum_{i=1}^{n-1}g_{in}^2-(A_g+B_g)+d_v\Phi(g)=0\,.
      \end{equation} 
      On the other hand, with the same calculations, we observe that 
      $$
      e^{f}=(A_u-A_g+1)\left(B_u-B_g+1\right)-\sum_{i=1}^{n-1}(u_{in}^2-2u_{in}g_{in}+g_{in}^2)\,,
      $$
       which is equivalent to 
       $$
       e^{f}=e^{f}+1+A_gB_g-\sum_{i=1}^{n-1}g_{in}^2-(A_g+B_g)-d_u\Phi(g)\,.
       $$
       Then, from the equation above, we obtain 
       \begin{equation}\label{eq:40}
        1+A_gB_g-\sum_{i=1}^{n-1}g_{in}^2-(A_g+B_g)-d_u\Phi(g)=0\,.
        \end{equation} 
        So, since $g$ has to solve both (\ref{eq:39}) and (\ref{eq:40}), it is a solution of 
        $$
        (d_u\Phi+d_v\Phi)(g)=0\,,
        $$ which is a linear elliptic  equation of second order without terms of order zero. Then, the strong maximum principle implies that $g$ must be constant. Since $\int_Mg dV=0$, $g=0$ and $u=v$. From this, the assertion follows.
        \end{proof}
\section{More general equations.}
Equation (\ref{eq:3}) is just a particular case of a more general class of elliptic fully nonlinear equations that can be described as follows. Again, let $M$ be a $n$-dimensional torus, $n>2$, and let $I\subset\{1,\ldots, n\}$ be a set of indices  with $\lvert I \rvert\ge 1$. We denote with $J=\{1,\ldots, n\}\setminus I$ and we consider the following equation:
\begin{equation}\label{eq:46}
\left(1+\sum_{i\in I}u_{ii}+G(\nabla u)\right)\left(1+\sum_{j\in J}u_{jj}+F(\nabla u)\right)- \sum_{\substack{i\in I \\ j\in J}}u_{ij}^2=e^{f}\,,
\end{equation}
 where, again, $f\in C^{\infty}(M)$ and $F$ and $G$ are smooth functions of the gradient of $u$ such that $F(0)=G(0)=0$. 

Clearly, equation (\ref{eq:46}) is the more general equation we can consider within this class. However, by a reorder of the coordinates, we can  study a simpler equation. Indeed, we assume that $\lvert I\rvert=k\ge 1$ and, using a diffeomorphism that reorders the coordinates, equation (\ref{eq:46}) is equivalent to the following:
\begin{equation}\label{eq:37}
\left(1+\sum_{i=n-k+1}^{n}u_{ii}+ G(\nabla u)\right)\left(1+\sum_{j=1}^{n-k}u_{jj}+ F(\nabla u)\right)-\sum_{i=n-k+1}^{n}\sum_{j=1}^{n-k}u_{ij}^2=e^{f}\,.
\end{equation} 
As we did in the previous sections, for the sake of simplicity,  from now on, we will denote
$$
A=1+\sum_{i=n-k+1}^{n}u_{ii}+ G(\nabla u)\,,\quad  B=1+\sum_{j=1}^{n-k}u_{jj}+ F(\nabla u)\,,
$$
 so that equation (\ref{eq:37}) can be rewritten as follows:
 \begin{equation}\label{eq:38}
 AB-\sum_{i=n-k+1}^{n}\sum_{j=1}^{n-k}u_{ij}^2=e^{f}\,.
 \end{equation}
 Obviously, we can assume, up to rename $A$ and $B$, that $k\le n-k.$

  Again, as in the previous case, $AB>0$ and, then,  $A$ and $B$ have the same sign on $M$. Then, we can consider $p\in M$ as the point where a solution $u\in C^{2}_0(M)$ attains its minimum and apply the Silvester Criterion for semidefinite matrixes. This yields to the fact that $u_{ii}(p)\ge 0 $, $\forall i=1,\ldots, n $. So, $A(p)>0$. Then, $A$ and $B$ are both positive on $M$.

The next step is to prove ellipticity for (\ref{eq:38}). As in Proposition \ref{prop:1.3},  we can define $\Phi^k\colon C^2(M)\to C^0(M)$ such that, $\forall v\in C^{2}(M)$, $$
\Phi^k(v)=A_vB_v-\sum_{i=n-k+1}^{n}\sum_{j=1}^{n-k}v_{ij}^2\,.
$$
 It is easy to prove that, if $u\in C_0^{2}(M)$ is a solution of (\ref{eq:38}), then $$
 d_u\Phi^k(v)=A_u\sum_{j=1}^{n-k}v_{jj}+B_u\sum_{i=n-k+1}^{n}v_{ii}-2\sum_{i=n-k+1}^{n}\sum_{j=1}^{n-k}u_{ij}v_{ij}+A_u\frac{d}{dt}F(\nabla u+tv)_{|_{t=0}}+B_u\frac{d}{dt}G(\nabla u+tv)_{|_{t=0}}\,,
 $$
  so, the matrix which represents the symbol of $d_u\Phi$ is $$
  P_u^k=\begin{pmatrix}A{\rm Id}_{n-k} & -C \\ -C^{t} & B{\rm Id}_{k}\end{pmatrix}\,,
  $$
   where ${\rm Id}_{n-k}$ and ${\rm Id}_{k}$ are, respectively, the identity in ${\rm GL}(n-k, \mathbb R)$ and ${\rm GL}(k, \mathbb R)$ and $C\in M(n-k,k,\mathbb{R})$  is such that 
   $$
   C_{st}=u_{s,n-k+t}\,,\quad  \forall s=1,\ldots, n-k\,, \quad \forall t=1,\ldots, k\,.
   $$
 In this case, for every vector $V=(V_{n-k}, V_k)\in \mathbb R^{n}$, we have that 
 $$
 P_u^kV=(A V_{n-k}-CV_k, - C^tV_{n-k}+B V_k)\,.
 $$ Then, denoted with  $\langle\cdot,\cdot\rangle_n$ the standard inner product on $\mathbb R^n$, we have
 $$
 \langle P_u^kV,  V\rangle_n=A\lvert V_{n-k}\rvert^2+B\lvert V_k\rvert^2-\langle CV_k,  V_{n-k} \rangle_{n-k}- \langle C^tV_{n-k}, V_k \rangle_{k}=A\lvert V_{n-k}\rvert^2+B\lvert V_{k}\rvert^2-2\langle CV_k,  V_{n-k}\rangle_{n-k}\,.
 $$  
 But, we know that 
 $$
\langle CV_k,  V_{n-k}\rangle_{n-k}\le  \lvert\langle CV_k,  V_{n-k}\rangle_{n-k}\rvert\le \lvert CV_{k}\rvert\lvert V_{n-k}\rvert\le \lVert C\rVert\lvert V_k\rvert\lvert V_{n-k}\rvert\,,
 $$ where $\lVert C\rVert=\left( \sum_{s=1}^{n-k}\sum_{t=1}^k C_{st }^2\right)^{\frac 12}$. This implies that 
 $$
  \langle P_u^kV,  V\rangle_n\ge A\lvert V_{n-k}\rvert^2+B\lvert V_k\rvert^2-2\lVert C\rVert\lvert V_k\rvert\lvert V_{n-k}\rvert =(\sqrt{A}\lvert V_{n-k}\rvert- \sqrt{B}\lvert V_k\rvert)^2+2(\sqrt{AB}- \lVert C\rVert)\lvert V_k\rvert\lvert V_{n-k}\rvert
 $$ which is positive if $\sqrt{AB}- \lVert C\rVert>0$. On the order hand,  using \eqref{eq:38}, we have that 
 $$
 AB=e^{f}+ \sum_{i=n-k+1}^{n}\sum_{j=1}^{n-k}u_{ij}^2=e^f+\lVert C\rVert^2
 $$  which implies  
 $$
AB> \lVert C\rVert_2^2\,, 
 $$  giving us the claim. 
    Ellipticity  guarantees the fact that, $\forall \zeta\in \mathbb{R}^n$: 
   $$
   A\sum_{j=1}^{n-k}\zeta_j^2+B\sum_{i=n-k+1}^{n}\zeta_i^2-2\sum_{i=n-k+1}^n\sum_{j=1}^{n-k}u_{ij}\zeta_i\zeta_j\ge 0\,. 
   $$
   The same arguments as the ones used to prove, respectively, Theorem \ref{thm:1.1}, Theorem \ref{thm:2.4} and Theorem \ref{thm:3.5} yield  openness and  uniqueness, the $C^0$ estimate and the higher order estimates assuming an estimate similar to the one in Theorem \ref{thm:3.1}. So, it remains to prove an analogous of Theorem \ref{thm:3.1} for (\ref{eq:38}). As one may notice from the proof of Theorem \ref{thm:3.1}, in order to obtain the estimate we want, it is sufficient to prove an inequality similar to (\ref{eq:56}) and repeat the same argument. 
Then, to obtain that, $\forall j=1,\ldots, n-k $, we can choose $\zeta^{j}\in \mathbb{R}^n$  such that, $\forall j'=1,\ldots, n-k,\,\, j'\ne j $, $\zeta_{j'}^j=0$, $\zeta_j^j=\sqrt{\eta_{jj}}$ and, $\forall i=n-k+1,\ldots, n$, $\zeta_{i}^j=sign(u_{ij})\sqrt{\eta_{ii}}$ and apply the ellipticity with $\zeta^j$. We obtain that:
$$
A\eta_{jj}+B\sum_{i=n-k+1}^n\eta_{ii}-2\sum_{i=n-k+1}^n\lvert u_{ij}\rvert\sqrt{\eta_{ii}\eta_{jj}}\ge 0\,. 
$$
 Then, we can sum these $n-k$ inequalities and obtain 
 \begin{equation}\label{eq:61}
 A\sum_{j=1}^{n-k}\eta_{jj}+(n-k)B\sum_{i=n-k+1}^n\eta_{ii}-2\sum_{i=n-k+1}^n\sum_{j=1}^{n-k}\lvert u_{ij}\rvert\sqrt{\eta_{ii}\eta_{jj}}\ge 0\,,
 \end{equation} 
 which is extremely similar to (\ref{eq:56}) but, due to the presence of the factor $n-k$, it does not guarantee what we are looking for. So, it remains to understand if  (\ref{eq:61}) can be improved in order to obtain the $C^2$ estimate that is needed.

 \medskip
 The author has no conflicts of interest to declare that are relevant to the content of this article.

\end{document}